\newtheorem{lemma}[equation]{Lemma}
\newtheorem{prop}[equation]{Proposition}
\newtheorem{thm}[equation]{Theorem}
\newtheorem{cor}[equation]{Corollary}
\newtheorem{defn}[equation]{Definition}
\newtheorem{conj}[equation]{Conjecture}
\theoremstyle{definition}
\newtheorem{exmp}[equation]{Example}
\newtheorem{rmk}[equation]{Remark}
\numberwithin{equation}{section}
\newcommand{\F}{\mathbf{F}}
\newcommand{\Aut}[2]{\operatorname{Aut}_{#1}(#2)}
\newcommand{\qt}[2]{#1 \backslash #2}
\newcommand{\isot}[2]{{}_{#1}#2}
\newcommand{\GL}[3][{}]{\operatorname{GL}^{#1}_{#2}(#3)}
\newcommand{\cat}[3]{\mathcal{#1}_{#2}^{#3}}  
\newcommand{\m}{morphism}
\newcommand{\gen}[1]{{\langle}#1{\rangle}}
\newcommand{\syl}[1]{Sylow $#1$-subgroup}
\newcommand{\we}{weighting}
\newcommand{\Mb}{M\"obius}
\newcommand{\Euc}{Euler characteristic}
\newcommand{\rchi}{\widetilde{\chi}}
\newcommand{\mynote}[1]{\noindent{\textcolor{red}{\textbf{[#1]}}}}
\title{Equivariant Euler characteristics of partition posets}
\author{Jesper M.~M\o ller}
\address{Institut for Matematiske Fag\\
  Universitetsparken 5\\
  DK--2100 K\o benhavn}
\email{moller@math.ku.dk}
\urladdr{htpp://www.math.ku.dk/~moller}
\thanks{Supported by the Danish National Research Foundation through
  the Centre for Symmetry and Deformation (DNRF92) and by Villum
  Fonden through the project Experimental Mathematics in Number
  Theory, Operator Algebras, and Topology} 
\subjclass[2010]{05E18, 06A07} 
\keywords{\Euc, partition lattice} 
\begin{document}
\date{\today}
\begin{abstract}
  We compute all the equivariant \Euc s of the $\Sigma_n$-poset of partitions
  of the $n$ element set.
\end{abstract}
\maketitle
%\tableofcontents

% \mynote{Find also the $r$th equivariant \Euc s in boolean and linear
%  case. Find reference to Hillar.}

\section{Introduction}
\label{sec:intro}

Let $G$ be a finite group and $\Pi$ a finite $G$-poset. 
For $r \geq 1$, let $C_r(G)$ denote the set
of tuples $X=(x_1,\ldots,x_r)$ of $r$ commuting elements
of $G$. Write $\Pi^X$ for the subposet consisting of all elements of $\Pi$
fixed by all elements of $X \in C_r(G)$.
The $r$th reduced
equivariant \Euc\ of the $G$ poset $\Pi$, as defined by Atiyah and Segal
\cite{atiyah&segal89}, is the normalized sum
\begin{equation*}
  \rchi_r(\Pi,G) = \frac{1}{|G|} \sum_{X \in C_r(G)} \rchi(\Pi^X)
\end{equation*}
of the reduced \Euc s of the subposets $\Pi^X$ as runs through the set
$C_r(G)$ of commuting $r$-tuples.

In this note we focus on equivariant \Euc s of partition posets.
Let $\Sigma_n$ denote the symmetric group of degree $n \geq 2$.  The set
$\Pi(\qt{\Sigma_{n-1}}{\Sigma_n})$ of partitions of the standard right
$\Sigma_n$-set $\qt{\Sigma_{n-1}}{\Sigma_n}$ is a (contractible) right
$\Sigma_n$-lattice with smallest element $\widehat 0$, the discrete partition,
and largest element $\widehat 1$, the indiscrete partition. We let
$\Pi^*(\qt{\Sigma_{n-1}}{\Sigma_n}) = \Pi(\qt{\Sigma_{n-1}}{\Sigma_n}) -
\{\widehat 0, \widehat 1\}$ be the (non-contractible) $\Sigma_n$-poset obtained
by removing $\widehat 0$ and $\widehat 1$. 

% We shall compute the reduced
% equivariant \Euc s
% \begin{equation*}
%    \rchi_r(\Pi^*(\qt{\Sigma_{n-1}}{\Sigma_n}),\Sigma_n) =
%    \frac{1}{|\Sigma_n|}\sum_{X \in C_r(\Sigma_n)} 
%     \rchi(\Pi^*(\qt{\Sigma_{n-1}}{\Sigma_n})^X), \qquad r \geq 1,
% \end{equation*}
% defined by Atiyah and Segal \cite{atiyah&segal89}. (The sum runs over the
% set $C_r(\Sigma_n)$ of all tuples $X=(x_1,\ldots,x_r)$ of $r$ commuting
% elements $x_i$ of $\Sigma_n$, and $\Pi^*(\qt{\Sigma_{n-1}}{\Sigma_n})^X$ is
% the subposet of partitions invariant under all permutations in $X$.)

We now state the result and defer the the explanation of the undefined
expressions till after theorem.

\begin{thm}\label{thm:key}
  The $r$th reduced equvariant \Euc\ of the $\Sigma_n$-poset
  $\Pi^*(\qt{\Sigma_{n-1}}{\Sigma_n})$ of partitions is
\begin{equation*}
  \rchi_r(\Pi^*(\qt{\Sigma_{n-1}}{\Sigma_n}),\Sigma_n) =
  \frac{1}{n}(a \ast b_r)(n)
  % \frac{1}{n} \sum_{d \mid n}
  % (-1)^{\frac{n}{d}+1} \prod_i%{1 \leq d_i \leq r} 
  % (-1)^{d_i} 
  %  \binom{[r]}{[d_i]}_{p_i} p_i^{\binom{d_i}2} 
\end{equation*}
when $n \geq 2$ and $r \geq 1$.  
% The sum ranges over all divisors $d$ of $n$
% and $d=\prod_i p_i^{d_i}$ is the prime factorization of $d$.
% not divisible by $p^{r+1}$ for any prime $p$.
\end{thm}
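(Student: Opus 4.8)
\emph{Proof strategy.} The plan is to pass from commuting tuples to the groupoid of finite $\Z^r$-sets, to exploit a self-similarity of the lattice of invariant partitions in order to derive a functional equation for a generating function, and to finish by Dirichlet inversion.

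First I would rewrite the left-hand side. A commuting $r$-tuple in $\Sigma_n$ is the same as a homomorphism $\phi\colon\Z^r\to\Sigma_n$, which makes $\{1,\dots,n\}$ into a $\Z^r$-set $S_\phi$; a partition is $\phi$-fixed precisely when $\Z^r$ permutes its blocks, so $(\Pi^*(\qt{\Sigma_{n-1}}{\Sigma_n}))^{\phi}$ is the open interval $(\widehat0,\widehat1)$ of the lattice $L(S_\phi)$ of $\Z^r$-stable partitions of $S_\phi$, and hence $\rchi((\Pi^*)^{\phi})=\mu_{L(S_\phi)}(\widehat0,\widehat1)$ by the standard identification of a \Mb\ number with the reduced Euler characteristic of the corresponding open interval. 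Since exactly $n!/|\Aut{\Z^r}{S}|$ of the $\phi$ yield a $\Z^r$-set $S_\phi$ of a prescribed isomorphism type $[S]$, for $n\ge2$ we obtain
\begin{equation*}
 \rchi_r\big(\Pi^*(\qt{\Sigma_{n-1}}{\Sigma_n}),\Sigma_n\big)=f_n,\qquad f_n:=\sum_{[S]:\,|S|=n}\frac{\mu_{L(S)}(\widehat0,\widehat1)}{|\Aut{\Z^r}{S}|},
\end{equation*}
the sum running over isomorphism classes of $\Z^r$-sets (and $f_1=1$). I will compute the generating function $\widetilde F(x):=\sum_{n\ge1}f_nx^n$.

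The crucial point is a self-similarity of the lattices $L(S)$. For $\pi\in L(S)$ the block set $B=S/\pi$ is again a $\Z^r$-set; each orbit $O\cong\Z^r/K$ of $B$ carries, on a chosen block, the structure of a $K$-set $S_O$, and --- just as for ordinary set partitions --- the interval $[\widehat0,\pi]$ in $L(S)$ is canonically the product $\prod_O L(S_O)$, so that $\mu_{L(S)}(\widehat0,\pi)=\prod_O\mu_{L(S_O)}(\widehat0,\widehat1)$. Since a finite-index subgroup $K\le\Z^r$ is itself free abelian of rank $r$, the sum $\sum_{[S_O]}\mu_{L(S_O)}(\widehat0,\widehat1)\,|\Aut{K}{S_O}|^{-1}\,y^{|S_O|}$ over nonempty $K$-sets is again exactly $\widetilde F(y)$. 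Now a pair $(S,\pi)$ with $\pi\in L(S)$ decomposes uniquely into ``connected'' pieces, one per $\Z^r$-orbit of blocks --- a connected piece being a finite-index subgroup $K\le\Z^r$ together with a nonempty $K$-set $S_O$, whose automorphism group has order $[\Z^r:K]\cdot|\Aut{K}{S_O}|$ --- and $\mu_{L(S)}(\widehat0,\pi)$ is multiplicative along this decomposition. The exponential formula for groupoid cardinalities then gives, grouping subgroups by their index $k$ and writing $c_r(k)$ for the number of index-$k$ subgroups of $\Z^r$,
\begin{equation*}
 \sum_{[(S,\pi)]}\frac{\mu_{L(S)}(\widehat0,\pi)}{|\operatorname{Aut}(S,\pi)|}\,x^{|S|}=\exp\!\Big(\sum_{k\ge1}\frac{c_r(k)}{k}\,\widetilde F(x^k)\Big).
\end{equation*}
On the other hand, summing the $\Aut{\Z^r}{S}$-invariant weight over $\pi\in L(S)$ first (orbit--stabilizer) and using $\sum_{\pi\in L(S)}\mu_{L(S)}(\widehat0,\pi)=1$ when $|S|\le1$ and $=0$ otherwise, the same double sum equals $1+x$. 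Comparing and taking logarithms, $\sum_{k\ge1}(c_r(k)/k)\,\widetilde F(x^k)=\log(1+x)$; extracting the coefficient of $x^n$ and clearing denominators yields $\sum_{d\mid n}d\,c_r(n/d)\,f_d=(-1)^{n-1}$. Therefore $n\mapsto nf_n$ is the Dirichlet convolution of $a(n):=(-1)^{n-1}$ with the Dirichlet inverse $b_r:=c_r^{-1}$ of $c_r$ (well-defined since $c_r(1)=1$; equivalently $\sum_n b_r(n)n^{-s}=\prod_{j=0}^{r-1}\zeta(s-j)^{-1}$), i.e.\ $nf_n=(a\ast b_r)(n)$ --- the assertion.

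The heart of the matter is the derivation of the functional equation: assembling the exponential formula with the correct automorphism orders, and especially spotting the self-similarity that makes $\widetilde F$ reappear on the right-hand side. The reduction and the final inversion are routine; as a check, for $r=1$ one finds $\widetilde F(x)=x-x^2$, consistent with $\sum_{k\ge1}(x^k-x^{2k})/k=\log\frac{1-x^2}{1-x}=\log(1+x)$.
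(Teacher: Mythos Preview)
Your argument is correct and takes a genuinely different route from the paper's. The paper expands $\rchi_r$ as a sum over abelian subgroups $A\le\Sigma_n$ weighted by $\varphi_r(A)$, uses the contractibility of $\Pi^*(S)^A$ for non-isotypical actions (Lemma~\ref{lemma:isotypical}) to restrict to freely acting $A$, computes $\rchi(\Pi^*(m\,\qt{1}{A})^A)$ explicitly via the higher \Mb\ number formula for abelian groups (Lemma~\ref{lemma:muabelian}), counts conjugates of $A$ in $\Sigma_n$ by hand (Lemma~\ref{lemma:key}), and then invokes Hall's formula for $\mu(1,A)$ to reduce to products of elementary abelian $p$-groups, arriving termwise at the explicit values defining $b_r$. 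You instead package the whole sum as a groupoid cardinality over finite $\Z^r$-sets, exploit the self-similarity that every finite-index subgroup of $\Z^r$ is again free abelian of rank $r$ to obtain the functional equation $\sum_{k\ge1}\frac{c_r(k)}{k}\widetilde F(x^k)=\log(1+x)$, and then Dirichlet-invert. In effect you identify $b_r$ as the Dirichlet inverse of the subgroup-counting function of $\Z^r$, which immediately yields the Dirichlet series of Corollary~\ref{cor:dirichlet} and, via the $q$-binomial theorem for $\prod_{j=0}^{r-1}(1-p^{j}x)$, the explicit values $b_r(p^e)=(-1)^ep^{\binom e2}\binom re_p$ of the paper. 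Your method is more structural and bypasses the explicit normalizer count, the isotypical contractibility lemma, and Hall's \Mb\ computation; the paper's approach is more hands-on and produces intermediate data (the $G$-Stirling matrices and higher \Mb\ numbers $\mu_n(H,G)$) of independent interest. Two small remarks: your symbol $c_r$ clashes with the paper's $c_r=a\ast b_r$, so rename it; and you should state explicitly (or cite) the classical identity $\sum_{k\ge1}\bigl|\{K\le\Z^r:[\Z^r{:}K]=k\}\bigr|\,k^{-s}=\prod_{j=0}^{r-1}\zeta(s-j)$, since that is what pins your Dirichlet inverse to the paper's explicitly defined $b_r$.
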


The multiplicative arithmetic sequence $a \ast b_r$ is the Dirichlet
convolution
\begin{equation*}
  (a \ast b_r)(n) = \sum_{d_1d_2=n} a(d_1)b_r(d_2)
\end{equation*}
of the the multiplicative arithmetic sequences $a$ and $b_r$ given by
\begin{equation*}
a(n)=(-1)^{n+1}, \qquad
b_r(n) = \prod_p
(-1)^{n_p}p^{\binom{n_p}{2}}\binom{r}{n_p}_p, \qquad r \geq 1, n \geq 1  
\end{equation*}
where $n=\prod_p p^{n_p}$ is the prime factorization of $n$ and the
$p$-binomial coefficient
\begin{equation*}
  \binom{r}{d}_p = 
  \frac{(p^r-1)\cdots (p^r-p^{d-1})}{(p^{d}-1) \cdots (p^{d}-p^{d-1})}
\end{equation*}
is the number of $d$-dimensional subspaces of the $r$-dimensional
$\F_p$-vector space \cite[Proposition 1.3.18]{stanley97}.

% Let us say that a natural number is $(r+1)$-power-free if it is not divisible
% by $p^{r+1}$ for any prime $p$.  Since $b_r(d)=0$ when $d$ is divisible by 
% and $r+1$ prime power, the sum of Theorem~\ref{thm:key} ranges over all
% $(r+1)$-power-free divisors $d$ of $n$.

\section{Partitions of finite $G$-sets}
\label{sec:white}

Let $G$ be a group and $S$ a finite right $G$-set.

\begin{defn}\label{defn:PiSG} 
\begin{enumerate}
\item A partition $\pi$ of $S$ is an equivalence relation on $S$.  The blocks
  of $\pi$ are the equivalence classes of $\pi$.  For any $x \in S$,
  $[x]_\pi$, or simply $[x]$, is the $\pi$-block of $x$. The set of
  $\pi$-blocks is denoted $\pi \backslash S$.
  
\item $\Pi(S)$ is the $G$-lattice of all partitions of $S$ and
    $\Pi^*(S)=\Pi(S) - \{\widehat 0, \widehat 1\}$ the $G$-poset of all
    partitions of $S$ but the discrete and the indiscrete partitions,
    $\widehat 0$ and $\widehat 1$.

  \item A partition of $S$ is a $G$-partition if $x \sim y \iff xg \sim yg$
    for all $x,y \in S$ and $g \in G$.

  \item $\Pi(S)^G$ is the lattice of all $G$-partitions of $S$ and $\Pi^*(S)^G
    = \Pi(G)^G - \{ \widehat 0, \widehat 1\}$ the poset of all $G$-partitions
    of $S$ but the discrete and indiscrete partitions.

  \item The isotropy subgroup at $x \in S$ is the subgroup $\isot xG = \{ g
    \in G \mid xg = x \}$ of $G$.

  \item If $\pi$ is a $G$-partition, the block isotropy subgroup at $x \in S$
    is the isotropy subgroup $\isot {[x]}G = \{ g \in G \mid (xg) \pi x \}$ at
    the $\pi$-block $[x]$ of $x$ in the $G$-set $\pi \backslash S$ of
    $\pi$-blocks.

\item The $G$-set $S$ is {\em isotypical\/} if all isotropy subgroups are
  conjugate. \label{defn:PiSG7}

\item The $G$-partition $\pi \in \Pi(S)^G$ is {\em isotypical\/} if the
  $G$-set $\qt{\pi}S$ of $\pi$-blocks is
  isotypical. $\Pi^{\mathrm{iso}}(S)^G$ is the poset of all isotypical
  $G$-partitions and $\Pi^{*+\mathrm{iso}}(S)^G = \Pi^{\mathrm{iso}}(S)^G - \{
  \widehat 0, \widehat 1\}$.
  \label{defn:PiSG8}

  \end{enumerate} 
\end{defn}

\begin{comment}
 We note that
\begin{itemize}
\item Isotropy subgroups are constant up to conjugacy over $G$-orbits in $S$
  as $\isot {xg}G = \isot xG^g$, $x \in S$, $g \in G$.
\item The block isotropy subgroups of a $G$-partition $\pi$ are constant up to
  conjugacy over $G$-orbits of blocks.
\item If $\pi$ is a $G$-partition of $S$, the orbit of the block isotropy
  subgroup through $x \in S$ is a subset of the $\pi$-block of $x$, $ x \cdot
  \isot {[x]_\pi}G \subseteq [x]_{\pi}$; we have equality, $ x \cdot \isot
  {[x]_\pi}G = [x]_{\pi}$, when $S$ is a transitive $G$-set
\end{itemize}
\end{comment}

The set $\Pi(S)$ of partitions of $S$ is partially ordered by refinement:
\begin{equation*}
  \pi_1 \leq \pi_2 \iff \forall x \in S \colon
  [x]_{\pi_1} \subseteq [x]_{\pi_2}
\end{equation*}
The {\em meet\/} of $\pi_1$ and $\pi_2$ is the partition $\pi_1 \wedge \pi_2$
with blocks $[x]_{\pi_1 \wedge \pi_2} = [x]_{\pi_1} \cap [x]_{\pi_2}$, $x \in
S$.  The discrete partition is $\widehat 0$ with blocks $[x]_{\widehat 0}
=\{x\}$, $x \in S$, and the indiscrete partition is $\widehat 1$ with block
$[x]_{\widehat 1} = S$, $x \in S$.

\begin{exmp}\label{exmp:omegaK}
  Let $K$ be a subgroup of $G$. The partition $\omega_K$, whose blocks
  $[x]_{\omega_K}=xK$ are the $K$-orbits in $S$, is an $N_G(K)$-partition of
  $S$. In particular, the partition $\omega_G$ whose blocks are the $G$-orbits
  is a $G$-partition.
\end{exmp}

The set $\Pi(S)$ of partitions of $S$ is a right $G$-lattice: For any
partition $\pi$ of $S$ and any $g \in G$, $\pi g$ is the partition given by
$x(\pi g)y \iff (xg)\pi(yg)$. Then $[x]_{\pi g}g = \{yg \mid x(\pi g)y \} =
\{yg \mid (yg)\pi(xg) \} = \{ y \mid y\pi(xg) \} = [xg]_{\pi}$. Obviously,
\begin{equation*}
  \text{$\pi$ is a $G$-partition} \iff
  \forall g \in G \colon \pi g = \pi \iff
  \forall g \in G \forall x \in X \colon [x]_\pi g = [xg]_\pi \iff
  \forall g \in G \forall b \in \pi \colon bg \in \pi
\end{equation*}
Thus the fixed poset for this $G$-action on $\Pi(S)$, $\Pi(S)^G$, is the set
of all $G$-partitions.  The discrete and the indiscrete partitions are
$G$-partitions.

% $[x]_\pi{}_xG \subseteq [x]_\pi$ as $x \sim y, xg=x \implies yg \sim xg =x
% \sim y$.

% If
% $\pi$ is a $G$-partition then $x_1 \pi x_2 \implies {}_{x_1}G = _{x_2}G$,
% isotropy subgroups are constant over blocks of $G$-partitions.

\begin{prop}\label{prop:isotropy}
  Let $\pi$ be a $G$-partition of $S$.
  \begin{enumerate}
  \item There is a right $G$-action on the set $\pi \backslash S$ of
    $\pi$-blocks such that $S \to \pi \backslash S$ is a $G$-map.
\item ${}_{x}G \leq {}_{[x]}G$ for any $x \in S$.
\item ${}_{xg}G = {}_{x}G^g$ and ${}_{[xg]}G = {}_{[x]}G^g$ 
\item  ${}_{xg}G \leq {}_{[x]}G^g$ for any $x \in S$ and any $g \in G$.
  \end{enumerate}
\end{prop}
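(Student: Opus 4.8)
The plan is to verify the four assertions in order, each being a direct consequence of the hypothesis $\pi g = \pi$ for all $g\in G$ together with the identity $[x]_\pi g = [xg]_\pi$ already recorded in the text; nothing here goes beyond elementary $G$-set bookkeeping, so the "main obstacle" is really just keeping the definitions straight.

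For (1) I would define the right $G$-action on $\pi\backslash S$ by $[x]_\pi\cdot g := [xg]_\pi$. The only step that is not purely formal is checking that this is well defined: if $[x]_\pi=[y]_\pi$, i.e.\ $x\pi y$, then the $G$-partition property $x\pi y\iff xg\,\pi\,yg$ gives $[xg]_\pi=[yg]_\pi$, so the value does not depend on the chosen representative. Associativity and unitality are immediate from $[x]_\pi\cdot(gh)=[xgh]_\pi=([x]_\pi\cdot g)\cdot h$ and $[x]_\pi\cdot 1=[x]_\pi$, and the quotient map $S\to\pi\backslash S$, $x\mapsto[x]_\pi$, is then tautologically a $G$-map, since $(xg)\mapsto[xg]_\pi=[x]_\pi\cdot g$.

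Next I would observe that, under this action, the block isotropy subgroup $\isot{[x]}G$ of Definition~\ref{defn:PiSG} is precisely the stabilizer of the point $[x]_\pi$ in the $G$-set $\pi\backslash S$, because $xg\,\pi\,x \iff [xg]_\pi=[x]_\pi \iff [x]_\pi\cdot g=[x]_\pi$. Statement (2) is then immediate: if $g\in\isot xG$ then $xg=x$, hence a fortiori $xg\,\pi\,x$, so $g\in\isot{[x]}G$. For (3), both identities are the standard conjugation formula for point stabilizers in a right $G$-set — the first applied to $S$, the second to $\pi\backslash S$ with the action from (1): $h$ stabilizes $xg$ iff $ghg^{-1}$ stabilizes $x$, and likewise $h$ stabilizes $[x]_\pi\cdot g$ iff $ghg^{-1}$ stabilizes $[x]_\pi$, so $\isot{xg}G=\isot xG^g$ and $\isot{[xg]}G=\isot{[x]}G^g$. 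Finally (4) follows by conjugating the inclusion in (2) by $g$ and rewriting both sides via (3): $\isot{xg}G=\isot xG^g\leq\isot{[x]}G^g$. I do not anticipate any genuine difficulty; the $G$-partition hypothesis is used exactly once, in the well-definedness check of (1) (and implicitly in identifying $\isot{[x]}G$ as a stabilizer), and the rest is routine.
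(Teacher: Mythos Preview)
Your proposal is correct and follows exactly the paper's approach: the paper's proof consists of the single sentence ``The $G$-action on $\pi\backslash S$ is given by $[x]g=[xg]$ for all $x\in S$ and $g\in G$,'' leaving the remaining verifications to the reader, and you have simply written those verifications out. There is no difference in method, only in level of detail.
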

\begin{proof}
  The $G$-action on $\pi \backslash S$ is given by $[x]g = [xg]$ for all $x
  \in S$ and $g \in G$.
\end{proof}

\begin{defn}\label{defn:contractor}
  Let $P$ be a subposet of a lattice. An element $c$ of $P$ is a contractor if
  $x \vee c \in P$ or $x \wedge c \in P$ for all $x \in P$.
\end{defn}

If $c$ is a contractor for $P$ then $x \leq x \vee c \geq c$ or $x \geq x
\wedge c \leq c$ are homotopies between the identity map of $P$ and the
constant map $c$. We view $P$ as a finite topological space with the order
right ideals as open sets.

\begin{lemma}\cite[Lemma 7.1]{arone:2015}\label{lemma:isotypical}
  $\Pi^*(S)^G$ is contractible unless $S$ is isotypical.
\end{lemma}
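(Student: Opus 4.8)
The plan is to exhibit an explicit contractor for $P=\Pi^*(S)^G$ under the assumption that $S$ is \emph{not} isotypical, and then conclude by the remark following Definition~\ref{defn:contractor}; note that $\Pi(S)^G$ is a sublattice of $\Pi(S)$ (the fixed points of a lattice-automorphism action), so Definition~\ref{defn:contractor} does apply to $P\subseteq\Pi(S)^G$. Let $\tau$ be the partition of $S$ with $x\sim_\tau y$ if and only if the isotropy subgroups $\isot{x}{G}$ and $\isot{y}{G}$ are conjugate in $G$; equivalently, the blocks of $\tau$ are the unions of the $G$-orbits of a fixed isotropy type. Since $\isot{xg}{G}=\isot{x}{G}^{g}$, this relation is $G$-invariant, so $\tau\in\Pi(S)^G$. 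Now $\tau=\widehat 1$ precisely when all isotropy subgroups of $S$ are conjugate, i.e.\ when $S$ is isotypical (Definition~\ref{defn:PiSG}), so $\tau\neq\widehat 1$; and $\tau\neq\widehat 0$ since a non-isotypical $S$ must have an orbit with at least two elements, and any two elements of one orbit lie in a common $\tau$-block. Hence $\tau\in P$.

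It remains to check that $\tau$ is a contractor: for every $\pi\in P$ we must have $\pi\vee\tau\in P$ or $\pi\wedge\tau\in P$. One inequality on each side is automatic, $\pi\vee\tau\geq\tau\neq\widehat 0$ and $\pi\wedge\tau\leq\pi\neq\widehat 1$, so if $\pi\wedge\tau\neq\widehat 0$ then $\pi\wedge\tau\in P$ and we are done. Thus it suffices to show that $\pi\wedge\tau=\widehat 0$ forces $\pi\leq\tau$, for then $\pi\vee\tau=\tau\in P$.

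So suppose $\pi\wedge\tau=\widehat 0$, that is $[x]_\pi\cap[x]_\tau=\{x\}$ for every $x\in S$. Because any two points of a single $G$-orbit are $\tau$-equivalent, this says that each $\pi$-block meets each $G$-orbit in at most one point. Fix a $\pi$-block $B=[x]_\pi$. By definition of the block isotropy subgroup, $\isot{[x]}{G}=\{g\in G\mid xg\in B\}$, and since $B\cap xG=\{x\}$ every such $g$ fixes $x$; hence $\isot{[x]}{G}\subseteq\isot{x}{G}$, and with Proposition~\ref{prop:isotropy} we get $\isot{[x]}{G}=\isot{x}{G}$. Now let $x,y\in B$. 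If $x$ and $y$ lie in the same orbit, they are equal; otherwise, since $[x]$ and $[y]$ are the same point $B$ of the $G$-set $\qt{\pi}{S}$, we have $\isot{x}{G}=\isot{[x]}{G}=\isot{[y]}{G}=\isot{y}{G}$. In either case $\isot{x}{G}$ and $\isot{y}{G}$ are conjugate, so $B$ lies inside a single $\tau$-block; thus $\pi\leq\tau$. The one delicate point is this last paragraph — establishing $\isot{[x]}{G}=\isot{x}{G}$ and then reading off that all points of a $\pi$-block have a common isotropy subgroup — while everything else is formal manipulation of meets, joins, and the definition of a contractor.
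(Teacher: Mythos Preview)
Your proof is correct and follows essentially the same approach as the paper: both use the partition $\tau=\theta_G$ (points equivalent iff their isotropy subgroups are conjugate) as a contractor for $\Pi^*(S)^G$, and both reduce to showing that $\pi\wedge\tau=\widehat 0$ forces $\pi\leq\tau$. The paper establishes this last implication by directly checking that $x\,\pi\,y$ implies $y\cdot\isot{x}{G}\subseteq[y]_{\pi\wedge\tau}$, whereas you go via the clean observation that each $\pi$-block meets each $G$-orbit in at most one point, so block isotropy equals point isotropy; the two arguments are equivalent reorganizations of the same idea.
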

\begin{proof}
  Let $\omega_G$ be the $G$-partition represented by the $G$-map $S \to S/G$
  to the $G$-set of $G$-orbits and $\theta_G$ the $G$-partition represented by
  the $G$-map $S \to S/G \to \qt{\cong}{S/G}$ to the set of iso\m\ classes of
  $G$-orbits. Explicitly, $x \omega_G y$ if and only if $x$ and $y$ are in the
  same $G$-orbit, and $x \theta_G y$ if and only if $x$ and $y$ have conjugate
  isotropy subgroups. % $_xG$ and $_yG$ are $G$-conjugate.
  We shall prove that $\theta_G$ is a contractor
  (Definition~\ref{defn:contractor}) for $\Pi^*(S)^G$ when $S$ is not
  isotypical.

  We first make some small observations.  Obviously, $\omega_G \leq
  \theta_G$. The $G$-action is trivial if and only if $\omega_G = \widehat
  0$. The $G$-action is isotypical if and only if $\theta_G= \widehat 1$.  If
  the $G$-action is trivial, all isotropy subgroups are equal to $G$, and
  therefore $\theta_G = \widehat 1$.  We may summarize these observation in a
  string
  \begin{equation*}
    \theta_G = \widehat 0 \implies \omega_G = \widehat 0 
    \iff \forall x \in S \colon {}_xG=G \implies \theta_G = \widehat 1 
    \iff \text{$S$ is isotypical}
  \end{equation*}
  of implications.

  Let $\pi$ be any $G$-partition of $S$. We claim that
  \begin{equation}\label{eq:claim1}
    \pi \wedge \theta_G = \widehat 0 \implies \pi = \widehat 0
  \end{equation}
  To see this first note that
  \begin{equation*}
   \forall x,y \in S \colon x \pi y \implies  y \cdot {}_xG 
   \subseteq [y]_{\pi \wedge \theta_G}
  \end{equation*}
  Indeed, let $x \pi y$ and $g \in {}_xG$. Then $y \pi (yg)$ for $y
  \pi x$, $x=xg$, and $(xg) \pi (yg)$. Thus $y$ and $yg$ are both in $[y]_\pi$
  and in $[y]_{\theta_G}$. Now assume that $\pi \wedge \theta_G = \widehat
  0$. Then
  \begin{equation*}
    \forall x,y \in S \colon x \pi y \implies {}_xG \leq {}_yG
  \end{equation*}
  for the block $[y]_{\pi \wedge \theta_G} = [y]_{\widehat 0} = \{y\}$ consists
  of $y$ alone which forces $yg = y$ for all $g \in {}_xG$. This can be
  sharpened to
  \begin{equation*}
    \forall x,y \in S \colon x \pi y \implies {}_xG = {}_yG
  \end{equation*}
  as the equivalence relation $\pi$ is symmetric, of course. Now, when $x$ and
  $y$ have the same isotropy subgroups, $x$ and $y$ belong to the same block
  under $\theta_G$. Thus we have shown $\pi \leq \theta_G$. Then $\pi = \pi
  \wedge \theta_G = \widehat 0$. This proves claim \eqref{eq:claim1}.

  Suppose that $S$ is not isotypical.  Then $\theta_G \neq \widehat 0,
  \widehat 1$ and $\theta_G$ belongs to the poset $\Pi^*(S)^G$. From claim
  \eqref{eq:claim1} we know that $\pi \wedge \theta_G \neq \widehat 0$ for all
  $\pi \in \Pi^*(S)^G$.  Thus $\theta_G$ is a contractor for $\Pi^*(S)^G$.
% Then $\pi \geq \pi \wedge \theta_G \leq \theta_G$ is
%   a contraction of the poset $\Pi^*(S)^G$ of $G$-partitions.
\end{proof}

There are, of course, isotypical $G$-sets $S$ for which $\Pi^*(S)^G$ is
contractible. 

\begin{exmp}[An isotypical $G$-set $S$ such that $\Pi^*(S)^G$ is contractible]
  Suppose that the Frattini subgroup $\Phi(G)$ of $G$ is nontrivial and
  proper. The $G$-set $S=G$ is transitive and hence isotypical. But still the
  poset $\Pi^*(S)^G$ is contractible: By Proposition~\ref{prop:GpiS},
  $\Pi^*(S)^G$ is the poset $(1,G)$ of non-identity proper subgroups of $G$,
  and $\Phi(G)$ is a contractor of $(1,G)$. (I thank Matthew Gelvin for
  pointing out this example.)
\end{exmp}

A $G$-partition of a {\em transitive\/} $G$-set $S$ is uniquely determined by
its block isotropy subgroup at a single point.

\begin{prop}\cite[Lemma 3]{white:76}\label{prop:GpiS}
  Let $S$ be a transitive $G$-set and $x$ a point of $S$. The block isotropy
  map
  \begin{equation*}
    \Pi(S)^G \to [{}_xG,G] = {}_xG/\cat SG{} \colon 
    \pi \to  {}_{[x]_\pi}G %%= \{g \in G \mid (xg)\pi x \} 
  \end{equation*}
  is an iso\m\ of posets.
\end{prop}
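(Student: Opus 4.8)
The statement is that for a transitive $G$-set $S$ with basepoint $x$, the block isotropy map $\pi \mapsto {}_{[x]_\pi}G$ is a poset isomorphism from $\Pi(S)^G$ onto the interval of subgroups between ${}_xG$ and $G$. Let me sketch how I would prove this.

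First I would fix the identification $S \cong {}_xG\backslash G$ sending $x$ to the coset ${}_xG$, so that a $G$-partition of $S$ is an equivalence relation on ${}_xG\backslash G$ compatible with right translation. Given $\pi \in \Pi(S)^G$, set $K = {}_{[x]_\pi}G = \{g \in G \mid xg \mathrel{\pi} x\}$. Since ${}_xG \leq {}_{[x]}G$ always (Proposition \ref{prop:isotropy}(2)), $K$ is a subgroup containing ${}_xG$, so the map is well-defined into $[{}_xG, G]$.

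The core of the argument is to exhibit the inverse. Given a subgroup $K$ with ${}_xG \leq K \leq G$, define a partition $\pi_K$ of $S = {}_xG\backslash G$ by declaring ${}_xG g_1 \mathrel{\pi_K} {}_xG g_2 \iff Kg_1 = Kg_2$; in other words, $\pi_K$ is the partition whose blocks are the fibers of the projection ${}_xG\backslash G \to K\backslash G$. I would check: (i) this is well-defined (uses ${}_xG \le K$); (ii) it is a $G$-partition, since right translation respects the coset partition $K\backslash G$; (iii) its block isotropy subgroup at $x$ is exactly $K$, i.e. $xg \mathrel{\pi_K} x \iff Kg = K \iff g \in K$. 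Then I would verify the two composites are identities: starting from $K$ and forming ${}_{[x]_{\pi_K}}G$ recovers $K$ by (iii); and starting from a $G$-partition $\pi$ with $K = {}_{[x]_\pi}G$, I must show $\pi = \pi_K$. For the latter, given $xg_1 \mathrel{\pi} xg_2$, apply right translation by $g_2^{-1}$ to get $x(g_1g_2^{-1}) \mathrel{\pi} x$, hence $g_1 g_2^{-1} \in K$, i.e. $Kg_1 = Kg_2$; the converse direction is symmetric, using that $\pi$ is a $G$-partition and an equivalence relation. Finally, both maps are clearly order-preserving: $\pi_1 \le \pi_2$ means every $\pi_1$-block sits inside a $\pi_2$-block, which on the $K$-side translates to the containment $K_1 \le K_2$ of the corresponding subgroups (finer partition $\leftrightarrow$ smaller subgroup), and conversely; so the bijection is a poset isomorphism.

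The only genuinely delicate point is bookkeeping the direction conventions: which way refinement corresponds to subgroup inclusion, and keeping the left/right coset conventions consistent with the paper's right $G$-action (the paper writes ${}_xG/\cat SG{}$, reading the interval as cosets, which already signals the fiber-of-projection picture). Everything else is a routine unwinding of definitions; the example immediately following in the excerpt (the case $S = G$, giving the poset of subgroups) is the special case ${}_xG = 1$, which is a good sanity check on the conventions.
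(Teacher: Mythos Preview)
Your proposal is correct and follows essentially the same approach as the paper: both construct the inverse map $K \mapsto \pi_K$ as the partition whose blocks are the fibres of the projection $\qt{{}_xG}G \to \qt KG$ (the paper writes the blocks as $xKg$), verify that the block isotropy subgroup of $\pi_K$ at $x$ is $K$, use transitivity to show $\pi_{{}_{[x]_\pi}G} = \pi$, and then note that order is clearly preserved. Your write-up is simply a more explicit version of the same argument.
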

\begin{proof}
  Let $H={}_xG$ be the isotropy subgroup of $x$.  For every subgroup $K$ of
  $G$ containing $H$, let $\pi_K$ be the $G$-partition of $S$ with blocks
  $xKg$, $g \in G$ (the fibres of $S=\qt HG \to \qt KG$).
  % (These subsets of $S$ are disjoint: If $xKg_1 \cap xKg_2
  % \neq \emptyset$, then there exist $k_1,k_2 \in K$ so that $k_1g_1 \in
  % Hk_2g_2 \subseteq Kg_2$ and then $Kg_1=Kg_2$.) 
  The $\pi_K$-block of $x$, $[x]_{\pi_K} = xK$, has isotropy subgroup $\{g \in
  G \mid xg \in xK \}=K$. Conversely, let $\pi$ be any $G$-partition of
  $S$. The orbit through $x$ of the block isotropy subgroup ${}_{[x]_\pi}G$ is
  $x \cdot {}_{[x]_\pi}G = [x]_\pi$ as $S$ is transitive. These observations
  show that $K \to \pi_K$ is an inverse to the block isotropy subgroup map
  $\pi \to {}_{[x]_\pi}G$. It is clear that these bijections respect the
  partial orderings.
\end{proof}

\begin{defn}\label{defn:OG}
  $\cat OG{}$ is the category of finite $G$-sets with surjective $G$-maps as
  \m s. 
%  $\cat OG*$ is the category obtained from  $\cat OG{}$ by eliminating
%  the final one-point $G$-set.
\end{defn}

% \mynote{Maybe $\cat OG{}$ is not a good name. It is not a category of
%   orbits. It is a category of finite $G$-sets. It is \lq an additive
%   extension\rq\ of the orbit category. Was this used by
%   Jackowski--McClure--Oliver somewhere?}

We may consider $G$-partitions as \m s in the category $\cat OG{}$.  To any
$G$-partition $\pi$ of the $G$-set $S$ we associate the surjective $G$-map $S
\to \qt {\pi}S$. Conversely, the blocks of the partition represented by the
surjective $G$-map $\pi \colon S \to T$ are the fibres of $\pi$. The block of
$x \in S$ is $\pi^{-1}(\pi(x))$. The overlap of the block and the $G$-orbit of
$x$ is the orbit through $x$ of the block isotropy subgroup, $\pi^{-1}(\pi(x))
\cap xG = x \isot {\pi(x)}G$.

\section{Euler characteristics of posets of $G$-partitions}
\label{sec:eucII}

Let $\Pi$ be a finite poset. For $a,b \in \Pi$ let
\begin{center}
\begin{tabular}{*{3}{>{$}l<{$}}} 
  a/\Pi = \{p \in \Pi \mid a \leq p \} \qquad {}  
  & a//\Pi = \{p \in \Pi \mid a<p \} \qquad {}  &
  k^a = -\rchi(a//\Pi) \\
  \Pi/b = \{ p \in \Pi \mid p \leq b\} \qquad {}   & 
  \Pi//b = \{ p \in \Pi \mid p<b\} \qquad {}   &
   k_b=-\rchi(\Pi//b)
\end{tabular}
\end{center}
denote the coslice of $\Pi$ under $a$, the proper coslice of $\Pi$ under $a$,
and the \we\ at $a$, and, dually, the slice of $\Pi$ over $b$, the proper
slice of $\Pi$ over $b$, and the co\we\ at $b$ \cite[Corollary 3.8]{gm:2012}.
The \Euc\ of $\Pi$
\begin{equation*}
  \sum_{a \in \Pi} k^a = \chi(\Pi) = \sum_{b \in \Pi} k_b
\end{equation*}
is the sum of the values of the \we\ or co\we . In particular, for a finite
$G$-set $S$, we can compute the \Euc\ of $\Pi^*(S)^G$,
\begin{equation}\label{eq:gm}
  \sum_{\pi \in \Pi^*(S)^G} -\rchi(\pi//\Pi^*(S)^G) = \chi(\Pi^*(S)^G) 
   = \sum_{\pi \in \Pi^*(S)^G} -\rchi(\Pi^*(S)^G//\pi)
\end{equation}
from its \we\ or co\we\ \cite[Corollary 3.8]{gm:2012}. We shall now determine
these functions.

\begin{prop}[Slices in $\Pi^*(S)^G$]\label{prop:we}
  For any $G$-partition $\pi$ of the right $G$-set $S$
  \begin{equation*}
    \pi / \Pi(S)^G = \Pi(\pi \backslash S)^G, \qquad
    \pi // \Pi^*(S)^G = \Pi^*(\pi \backslash S)^G
  \end{equation*}
 The \we\ for $\Pi^*(S)^G$
 \begin{equation*}
   k^\pi = -\rchi(\Pi^*(\pi \backslash S)^G), \qquad \pi \in \Pi^*(S)^G,
 \end{equation*}
 vanishes at $\pi$ unless $\pi$ is isotypical
 (Definition~\ref{defn:PiSG}.\eqref{defn:PiSG8}). 
\end{prop}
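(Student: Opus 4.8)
The plan is to establish the two poset identities first and then deduce the vanishing statement from Lemma~\ref{lemma:isotypical}.

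\textbf{Step 1: The coslice identity.} I would first show that $\pi/\Pi(S)^G = \Pi(\pi\backslash S)^G$. A $G$-partition $\sigma$ of $S$ with $\sigma \geq \pi$ is one whose blocks are unions of $\pi$-blocks; equivalently, $\sigma$ factors through the quotient map $S \to \pi\backslash S$, i.e. $\sigma$ is the pullback along $S \to \pi\backslash S$ of a partition $\bar\sigma$ of the set $\pi\backslash S$. Using the description of $G$-partitions as surjective $G$-maps in $\cat OG{}$ (from the discussion after Definition~\ref{defn:OG}), the partitions $\sigma$ with $\sigma\geq\pi$ correspond exactly to surjective $G$-maps out of $\pi\backslash S$, which is precisely $\Pi(\pi\backslash S)^G$ — here one uses Proposition~\ref{prop:isotropy}(1) so that $\pi\backslash S$ genuinely carries a $G$-action making $S \to \pi\backslash S$ a $G$-map. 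The correspondence $\sigma \leftrightarrow \bar\sigma$ is order-preserving in both directions and carries $\pi$ itself to $\widehat 0$ on $\pi\backslash S$. This is routine but I should be careful to check that a partition of $S$ refining coarser than $\pi$ is automatically a $G$-partition iff the induced partition of $\pi\backslash S$ is a $G$-partition, which follows from surjectivity of $S \to \pi\backslash S$ and $G$-equivariance of that map.

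\textbf{Step 2: The proper coslice.} The second identity $\pi//\Pi^*(S)^G = \Pi^*(\pi\backslash S)^G$ is then immediate: $\pi//\Pi^*(S)^G$ consists of the $\sigma$ with $\pi < \sigma$ and $\sigma \neq \widehat 1_S$, which under the bijection of Step~1 corresponds to the $\bar\sigma$ with $\widehat 0 < \bar\sigma$ and $\bar\sigma \neq \widehat 1_{\pi\backslash S}$ (noting $\widehat 1_S$ corresponds to $\widehat 1_{\pi\backslash S}$ since the full-block partition pulls back to the full-block partition), i.e. $\Pi^*(\pi\backslash S)^G$. Hence $k^\pi = -\rchi(\pi//\Pi^*(S)^G) = -\rchi(\Pi^*(\pi\backslash S)^G)$.

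\textbf{Step 3: Vanishing.} Finally, to see that $k^\pi = 0$ unless $\pi$ is isotypical: by Definition~\ref{defn:PiSG}.\eqref{defn:PiSG8}, $\pi$ is isotypical precisely when the $G$-set $\pi\backslash S$ is isotypical. If $\pi$ is \emph{not} isotypical, then $\pi\backslash S$ is a non-isotypical $G$-set, so by Lemma~\ref{lemma:isotypical} the poset $\Pi^*(\pi\backslash S)^G$ is contractible, whence $\rchi(\Pi^*(\pi\backslash S)^G) = 0$ and therefore $k^\pi = 0$. I do not expect any real obstacle here; the only delicate point is the bookkeeping in Step~1 — verifying that the quotient $\pi\backslash S$ inherits a well-defined $G$-set structure and that the refinement order on $G$-partitions above $\pi$ matches the refinement order on all partitions of $\pi\backslash S$, rather than some proper subposet — but this is exactly what Proposition~\ref{prop:isotropy} and the ``$G$-partitions as epimorphisms'' viewpoint are set up to provide.
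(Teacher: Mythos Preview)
Your proposal is correct and follows essentially the same approach as the paper: the paper's proof just sets up the bijection $\rho \leftrightarrow$ (partition of $S$ with blocks $[[x]_\pi]_\rho$) and notes it is a $G$-partition iff $\rho$ is, which is exactly your Step~1 phrased in the epimorphism language. You are slightly more explicit than the paper in spelling out Step~3 (the paper leaves the appeal to Lemma~\ref{lemma:isotypical} implicit in the proposition statement), but there is no real difference in method.
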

\begin{proof}
  Let $\rho$ be a partition of the right $G$-set $\pi \backslash G$ of
  blocks of $\pi$. There is then a partition of $S$ with blocks $[x] =
  [[x]_\pi]_\rho$, $x \in S$. This new partition is a $G$-partition if and only
  if $\rho$ is a $G$-partition of $\pi \backslash S$. Any $G$-partition $\geq
  \pi$ of $S$ arises in this way. 
\end{proof}

\begin{prop}[Coslices in $\Pi^*(S)^G$]\label{prop:cowe}
 For any $G$-partition $\pi$ of the right $G$-set $S$
\begin{equation*}
 \Pi(S)^G/\pi = \prod_{BG \in \pi \backslash S /G } \Pi(B)^{_{B}G}, \qquad
 \Pi^*(S)^G//\pi =
 \big(\prod_{BG \in \pi \backslash S /G} \Pi(B)^{_{B}G}\big)^* 
\end{equation*}
The co\we\ for $\Pi^*(S)^G$
\begin{equation*}
  k_\pi = - \prod_{\substack{BG \in \pi \backslash S /G \\ |B|>1}}
  \rchi(\Pi^*(B)^{{}_BG}),\qquad \pi \in \Pi^*(S)^G,
\end{equation*}
vanishes at $\pi$ unless all blocks $B$ of $\pi$ are isotypical $_BG$-sets.
\end{prop}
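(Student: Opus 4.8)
The plan is to describe the proper slice $\Pi^*(S)^G//\pi$ of partitions strictly below a fixed $G$-partition $\pi$, by unwinding what a $G$-partition $\rho \leq \pi$ looks like. Such a $\rho$ is obtained by breaking each block $B$ of $\pi$ into smaller pieces, subject to $G$-equivariance. First I would observe that a $G$-partition $\rho \leq \pi$ of $S$ is the same thing as a choice, for each $G$-orbit of blocks of $\pi$, of a partition of a representative block $B$ that is invariant under the block isotropy group ${}_BG$; equivalently, one chooses $\rho|_B \in \Pi(B)^{{}_BG}$ for each orbit $BG \in \pi\backslash S / G$, and transports it around the orbit. This gives the product decomposition $\Pi(S)^G/\pi = \prod_{BG} \Pi(B)^{{}_BG}$ as posets, where the order on each factor is refinement and the order on the product is componentwise, matching refinement of $\rho$ in $S$.

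Next I would pass to the punctured version. The bottom element $\widehat 0$ of $\Pi(S)^G/\pi$ corresponds to the all-singletons choice in every factor, and the top element is $\pi$ itself (all blocks $B$ kept whole, i.e.\ $\widehat 1$ in each factor). Removing $\widehat 0$ and $\widehat 1$ from the product $\prod_{BG} \Pi(B)^{{}_BG}$ gives the poset $\big(\prod_{BG} \Pi(B)^{{}_BG}\big)^*$, which is by definition $\Pi^*(S)^G//\pi$. For the reduced Euler characteristic, I would use the standard fact that for a finite product of bounded posets $P_i$ (each with $\widehat 0$ and $\widehat 1$) one has $\rchi\big((\prod_i P_i)^*\big) = -\prod_i \rchi(P_i^*)$; equivalently, in terms of the join (order complex) $\|(\prod P_i)^*\| \simeq \|P_1^*\| * \cdots * \|P_n^*\|$, reduced homology of a join shifts degrees additively, so reduced Euler characteristics multiply up to the sign coming from $\rchi(\emptyset)=-1$. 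Applying this with $P_{BG} = \Pi(B)^{{}_BG}$ and noting that a factor with $|B|=1$ is a single point (so $\Pi(B)^{{}_BG}$ has $\widehat 0 = \widehat 1$, contributing a trivial, already-collapsed factor that should simply be omitted) yields
\begin{equation*}
  k_\pi = -\rchi\big(\Pi^*(S)^G//\pi\big)
        = -\prod_{\substack{BG \in \pi\backslash S/G \\ |B|>1}} \rchi\big(\Pi^*(B)^{{}_BG}\big).
\end{equation*}

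Finally, the vanishing statement is immediate from Lemma~\ref{lemma:isotypical}: if some block $B$ with $|B|>1$ is not an isotypical ${}_BG$-set, then $\Pi^*(B)^{{}_BG}$ is contractible, so $\rchi(\Pi^*(B)^{{}_BG}) = 0$ and the whole product vanishes. The main obstacle, and the step I would write most carefully, is the bookkeeping in the first paragraph: checking that transporting a ${}_BG$-invariant partition of one block $B$ around its $G$-orbit produces a well-defined $G$-partition of $\bigsqcup_{g} Bg$ independent of the coset representative chosen, and that this construction is inverse to restriction — i.e.\ genuinely establishing the claimed poset isomorphism $\Pi(S)^G/\pi \cong \prod_{BG}\Pi(B)^{{}_BG}$ rather than just a bijection on underlying sets. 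The join/Euler-characteristic computation is then routine.
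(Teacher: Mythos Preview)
Your overall strategy is exactly the paper's: identify $\Pi(S)^G/\pi$ with the product $\prod_{BG}\Pi(B)^{{}_BG}$ by restricting to one block per $G$-orbit and transporting back (the paper carries out precisely the well-definedness check you flag as the main obstacle), then invoke multiplicativity of the reduced \Euc\ and Lemma~\ref{lemma:isotypical} for the vanishing.

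There is, however, a sign slip in your multiplicativity step. The correct identity is
\[
  \rchi\Big(\big(\textstyle\prod_i L_i\big)^*\Big) \;=\; \prod_i \rchi(L_i^*),
\]
with no extra minus sign; this is nothing more than multiplicativity of the M\"obius function on a product of bounded posets, $\mu_{L_1\times\cdots\times L_n}(\widehat 0,\widehat 1)=\prod_i\mu_{L_i}(\widehat 0,\widehat 1)$, and it is what the paper cites (via \cite[Proposition~2.8]{arone:2015}). Your join justification is not right as stated: the order complex of $(\prod_i L_i)^*$ is not the join of the $\|L_i^*\|$ but rather, by Walker's theorem, the $(n{-}1)$-fold suspension of that join, and those suspensions supply exactly the sign needed to recover the unsigned product formula. (Check $L_1=L_2=\{\widehat 0<\widehat 1\}$: each $L_i^*$ is empty with $\rchi=-1$, while $(L_1\times L_2)^*$ is two incomparable points with $\rchi=+1$, not $-1$.) Your displayed final formula for $k_\pi$ is nonetheless correct, so you made two cancelling sign errors; just replace the intermediate claim by the M\"obius multiplicativity argument and the proof goes through exactly as in the paper.
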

\begin{proof}
  Let $\pi$ be a $G$-partition and $B$ one its blocks.  Observe first that the
  blocks contained in $B$ of a $G$-partition $\lambda \leq \pi$ determine all
  blocks of $\lambda$ contained in any of the blocks of the orbit $BG$ through
  $B$ for the $G$-action on $\pi \backslash S$.
 
  Let $B$ be a block, with isotropy subgroup $_BG$, of the $G$-partition
  $\pi$. Let $\lambda$ be a $_BG$ partition of $B$. Extend $\lambda$ to a
  $G$-partition of the orbit $BG$ of $B$ in $\pi$ by $[xg]_\lambda=[x]_\lambda
  g$. We must argue that this extension is well-defined. Suppose that $x_1g_1
  = x_2g_2$ for some $x_1,x_2 \in B$ and $g_1,g_2 \in G$. We must show that
  $[x_1]_\lambda g_1 = [x_2]_\lambda g_2$. We have $x_2 = x_2g_2g_2^{-1} =
  x_1g_1g_2^{-1}$. From $B=[x_2]_\pi = [x_1g_1g_2^{-1}]_\pi = [x_1]_\pi
  g_1g_2^{-1} = Bg_1g_2^{-1}$ we get that $g_1g_2^{-1}$ stabilizes the block
  $B$. As $\lambda$ is a $_BG$-partition, $[x_1]_\lambda g_1 = [x_1]_\lambda
  g_1g_2^{-1}g_2 = [x_1g_1g_2^{-1}]_\lambda g_2 = [x_2]_\lambda g_2$ as we
  wanted.

  Conversely, if $\lambda$ is a $G$-partition and $\lambda \leq \pi$ then the
  blocks of $\lambda$ inside a fixed block $B$ of $\pi$ form a $_BG$-partition
  of $B$, of course.  

  According to Quillen the reduced \Euc\ is multiplicative: $\rchi((\prod
  L_i)^*) = \prod \rchi(L_i^*)$ for lattices $L_i$ of more than one element
  \cite[Proposition 2.8]{arone:2015}.

  If the block $B$ of partition $\pi$ consists of a single element of $S$,
  then also the partition poset $\Pi(B)$ consists of a single element so it
  can be omitted from the poset product $\prod_{B \in \pi \backslash S}\Pi(B)$.
\end{proof}

In all cases, 
\begin{equation}\label{eq:wecowe}
  \sum_{\pi \in \Pi^*(S)^G} \rchi(\Pi^*(\pi \backslash S)^G) =
  -\chi(\Pi^*(S)^G) =
  \sum_{\pi \in \Pi^*(S)^G} 
   \prod_{\substack{BG \in \pi \backslash S /G \\ |B|>1}}
  \rchi(\Pi^*(B)^{{}_BG}) 
\end{equation}
where the sum on the left can be restricted to the $G$-partitions $\pi$ with
$G$-isotypical block set $\pi \backslash S$, and the sum on the right can be
restricted to the $G$-partitions $\pi$ for which $_BG$ acts isotypically on
every block $B$ of $\pi$. If $G$ acts non-isotypically on $S$ then these sums
equal $0$.

\begin{exmp}[Two examples of $G$-partition posets]
The poset $\Pi^*(S)^G$ of nontrivial $G$-partitions for $S=\{1,2,\ldots,4\}$ and
$G=\gen{(1,2)(4,5)} \leq \Sigma_4$ (isotypical):
\begin{center}
  \begin{tikzpicture}
    \node (1) at (-4,2) {
      \begin{tabular}{cc}
        $13-24$ \\ $(k^\bullet, k_\bullet)=(1,1)$ 
      \end{tabular}};
    \node (3) at (0,2) {
      \begin{tabular}{cc}
        $12-34$ \\ $(k^\bullet, k_\bullet)=(1,-1)$ 
      \end{tabular}};
    \node (2) at (4,2) {
      \begin{tabular}{cc}
        $14-23$ \\ $(k^\bullet, k_\bullet)=(1,1)$ 
      \end{tabular}};
    \node (5) at (-2,0) {
      \begin{tabular}{cc}
        $1-2-34$ \\ $(k^\bullet, k_\bullet)=(0,1)$ 
      \end{tabular}};
    \node (4) at (2,0) {
      \begin{tabular}{cc}
        $12-3-4$ \\ $(k^\bullet, k_\bullet)=(0,1)$ 
      \end{tabular}};
    \draw[-] (5) -- (3);
    \draw[-] (4) -- (3);
    \node at (8,1) {$\sum k^\bullet = 3 = \sum k_\bullet$};
  \end{tikzpicture}
\end{center}

The poset $\Pi^*(S)^G$ of nontrivial $G$-partitions for $S=\{1,2,\ldots,6\}$ and
$G=\gen{(1,2,3),(4,5)} \leq \Sigma_6$ (non-isotypical):  
\begin{center}
\begin{tikzpicture}
   \node (1) at (-4,2) {
     \begin{tabular}{cc}
       $1236-45$ \\ $(k^\bullet, k_\bullet)=(1,0)$
     \end{tabular}};
   \node (3) at (0,2) {
     \begin{tabular}{cc}
       $12345-6$ \\ $(k^\bullet, k_\bullet)=(1,0)$ 
     \end{tabular}};
   \node (2) at (4,2) {
     \begin{tabular}{cc}
       $123-456$ \\ $(k^\bullet, k_\bullet)=(1,0)$ 
     \end{tabular}};
   \node (4) at (-4,0) {
     \begin{tabular}{cc}
       $1236-4-5$ \\ $(k^\bullet, k_\bullet)=(0,0)$ 
     \end{tabular}};
   \node (5) at (0,0) {
     \begin{tabular}{cc}
       $123-45-6 = \theta_G$ \\ $(k^\bullet, k_\bullet)=(-2,-1)$ 
     \end{tabular}};
   \node (7) at (4,0) {
     \begin{tabular}{cc}
       $1-2-3-456$ \\ $(k^\bullet, k_\bullet)=(0,0)$ 
     \end{tabular}};
   \node (6) at (-2,-2) {
     \begin{tabular}{cc}
       $123-4-5-6$ \\ $(k^\bullet, k_\bullet)=(0,1)$ 
     \end{tabular}};
   \node (8) at (2,-2) {
     \begin{tabular}{cc}
       $1-2-3-45-6$ \\ $(k^\bullet, k_\bullet)=(0,1)$ 
  \end{tabular}};
   \draw[-] (4) -- (1);
   \draw[-] (5) -- (3);
   \draw[-] (7) -- (2);
   \draw[-] (6) -- (4);
   \draw[-] (6) -- (5);
   \draw[-] (8) -- (5);
   \draw[-] (8) -- (7);
   \draw[-] (5) -- (1);
   \draw[-] (5) -- (2);
   \draw[-] (5) -- (3);
   \node at (9,0) {$\sum k^\bullet = 1 = \sum k_\bullet$};
\end{tikzpicture}
\end{center}
\end{exmp}

\begin{cor}\label{cor:iso}
  The inclusion $\Pi^{*+\mathrm{iso}}(S)^G \hookrightarrow \Pi^*(S)^G$ is a
  homotopy equivalence.
\end{cor}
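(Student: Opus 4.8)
The plan is to run an induction on $|S|$ and to apply Quillen's Theorem A, in its ``upper fibre'' form, to the inclusion $f\colon \Pi^{*+\mathrm{iso}}(S)^G \hookrightarrow \Pi^*(S)^G$. The case $|S|\le 1$ is trivial because then $\Pi^*(S)^G=\emptyset$, so assume the statement is known for every finite group acting on a set of size $<|S|$. By Quillen's Theorem A it then suffices to show that for each $\pi\in\Pi^*(S)^G$ the upper fibre $f^{-1}(\{\sigma\in\Pi^*(S)^G : \sigma\ge\pi\})$ — that is, the poset of isotypical $G$-partitions $\sigma$ with $\pi\le\sigma<\widehat 1$ — is contractible.

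The next step is to rewrite this fibre via Proposition~\ref{prop:we}. The coslice $\pi/\Pi(S)^G$ is isomorphic to $\Pi(\pi\backslash S)^G$, and under this isomorphism a $G$-partition $\sigma\ge\pi$ corresponds to a $G$-partition $\rho$ of the $G$-set $T:=\pi\backslash S$ whose block set $\rho\backslash T$ is $G$-isomorphic to $\sigma\backslash S$ (a $\rho$-block is a set of $\pi$-blocks, whose union is the corresponding $\sigma$-block, and this correspondence is $G$-equivariant). Hence $\sigma$ is isotypical if and only if $\rho$ is, and the fibre is identified with $\{\rho\in\Pi(T)^G : \rho\text{ isotypical},\ \rho\ne\widehat 1\}$. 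Note $|T|<|S|$ since $\pi\ne\widehat 0$, and $|T|\ge 2$ since $\pi\ne\widehat 1$, so $\widehat 0_T\ne\widehat 1_T$.

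Now split on whether $\pi$ is isotypical, i.e.\ whether $T$ is an isotypical $G$-set. If it is, then $\widehat 0_T$ is itself isotypical, so it lies in the fibre and is its least element; a poset with a least element is contractible, and we are done in this case. If $T$ is not isotypical, then $\widehat 0_T$ is not isotypical either, so it is automatically absent from the fibre, which is therefore exactly $\Pi^{*+\mathrm{iso}}(T)^G$. By the inductive hypothesis applied to the strictly smaller $G$-set $T$, this poset is homotopy equivalent to $\Pi^*(T)^G$, which is contractible by Lemma~\ref{lemma:isotypical} since $T$ is not isotypical; hence the fibre is contractible. Having verified contractibility of all upper fibres, Quillen's Theorem A shows $f$ is a homotopy equivalence, closing the induction.

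The main obstacle is exactly the non-isotypical case of the fibre, and it explains why an induction (rather than an explicit contractor or retraction) is used: coarsening $\pi$ by the isotropy-type partition $\theta_G$ of its block set does produce an isotypical $G$-partition above $\pi$, but this assignment fails to be order-preserving, so it does not yield a monotone retraction $\Pi^*(S)^G\to\Pi^{*+\mathrm{iso}}(S)^G$. Instead one reduces the contractibility of the fibre over a non-isotypical $\pi$ to the same statement for the strictly smaller $G$-set $\pi\backslash S$, where Lemma~\ref{lemma:isotypical} finally delivers contractibility.
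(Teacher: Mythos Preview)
Your argument is correct, but it differs from the paper's. The paper dispatches the corollary in one line by invoking Bouc's theorem: for every non-isotypical $\pi$ the proper coslice $\pi//\Pi^*(S)^G$ in the \emph{full} poset equals $\Pi^*(\pi\backslash S)^G$ (Proposition~\ref{prop:we}), and this is contractible by Lemma~\ref{lemma:isotypical}; removing such elements one at a time (in any linear extension, so that nothing above the current element has yet been deleted) therefore preserves the homotopy type. No induction is needed.

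Your route via Quillen's Theorem~A is forced to look at the upper fibre inside the \emph{subposet} $\Pi^{*+\mathrm{iso}}(S)^G$, which over a non-isotypical $\pi$ is $\Pi^{*+\mathrm{iso}}(\pi\backslash S)^G$ rather than $\Pi^*(\pi\backslash S)^G$; this is precisely why you need the inductive hypothesis before Lemma~\ref{lemma:isotypical} can be applied. The trade-off is that you rely only on the very standard Quillen fibre criterion, whereas the paper's argument is shorter but presupposes Bouc's element-removal lemma. Both approaches rest on the same two inputs (Proposition~\ref{prop:we} and Lemma~\ref{lemma:isotypical}); the difference is purely in the homotopy-theoretic packaging.
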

\begin{proof}
  This follows immediately from Bouc's theorem \cite{bouc84a} since $\pi //
  \Pi^*(S)^G$ is contractible unless $\pi$ is isotypical by
  Proposition~\ref{prop:we} and Lemma~\ref{lemma:isotypical}.
\end{proof}

Because of Corollary~\ref{cor:iso} we now restrict attention to isotypical
$G$-partitions of isotypical $G$-sets.

For any $G$-orbit $S$ and any natural number $n \geq 1$, let 
$nS = \coprod_nS$ be the isotypical $G$-set with $n$ $G$-orbits isomorphic to
$S$. 

\begin{defn}\label{defn:GStirling}
Let $S$ and $T$ be $G$-orbits.
\begin{itemize}
\item An $nS/kT$-partition is an isotypical $G$-partition of $nS$ with block
  $G$-set isomorphic to $kT$.
\item The $G$-Stirling number of the second kind 
\begin{equation*}
  S_G(nS,kT) = |\{ \pi \in \Pi(nS)^G \mid \qt{\pi}{(nS)} \cong kT \}|
%% \stackrel{?}{=} \frac{|\cat OG{}(nS,kT)|}{|\cat OG{}(kT)|}
\end{equation*}
is the number $nS/kT$-partitions.
% \item The $G$-Bell number % of the isotypical $G$-set $nS$
% \begin{equation*}
%   B_G(nS) = |\Pi^{\mathrm{iso}}(nS)^G| =  \sum_{kT \in [\cat OG{\mathrm{iso}}]} S_G(nS,kT)
% \end{equation*}
% is the number of isotypical $G$-partitions of $nS$.
\end{itemize}
\end{defn}

In the following, $\cat SG{}$ is the poset of subgroups, and $[\cat SG{}]$ the
poset of subgroup conjugacy classes of $G$.  We write $\zeta_G$, or just
$\zeta$, for the poset incidence matrix (with $\zeta_G(H,K)=1$ if $H \leq K$
and $\zeta_G(H,K)=0$ otherwise) and $\mu=\mu_G = \zeta_G^{-1}$ for the \Mb\
matrix of $\cat SG{}$.

\begin{defn}\label{defn:StirlingMatrix}
  The $G$-Stirling matrix of degree $n$ is the square $(n|[\cat SG{}]| \times
  n|[\cat SG{}]|)$-matrix
  \begin{equation*}
    [\zeta]_G \otimes S_G =
\big((S_G(s\qt HG,t\qt KG))_{ 1 \leq s,t \leq n}\big)_{H,K \in [\cat SG{}]}    
  \end{equation*}
   obtained as
  the $(|[\cat SG{}]| \times |[\cat SG{}]|)$-matrix of $(n \times n)$-block
  matrices $(S_G(s\qt HG,t\qt KG))_{1 \leq s,t \leq n}$ of Stirling numbers
  with fixed $G$-orbits $\qt GH$ and $\qt KG$.
%%%%
% The reduced degree $n$ Stirling matrix for $G$,
% \begin{equation*}
% (S_G(sS,tT))_{ \substack{S,T
%     \in [\cat SG*] \\ 1 \leq s,t \leq n}}  
% \end{equation*}
% is the square $(n(|[\cat SG*]| \times n|[\cat SG*]|)$ obtained by deleting the
% entries involving orbits of length $1$.
\end{defn}

If we order the subgroups of $G$ in decreasing order starting with $G$ itself,
the $G$-Stirling matrix is lower triangular.

If we in Equation~\ref{eq:gm} insert the values from Proposition~\ref{prop:we}
we obtain formulas for the reduced \Euc\ of the poset $\Pi^*(nS)^G$,
\begin{equation}\label{eq:we}
  \rchi(\Pi^*(nS)^G) = -1 - \sum_{T,k} \rchi(\Pi^*(kT)^G) S_G(nS,kT), \qquad
  1 =  \sum_{k|T|>1} -\rchi(\Pi^*(kT)^G) S_G(nS,kT) 
\end{equation}
with $T$ ranging over the set of iso\m\ classes of $G$-orbits and $k \geq 1$
over natural numbers with $k|T|>1$.  (Observe that $S_G(nS,nS) = 1$.)  In
matrix notation
\begin{equation}\label{eq:wematrix}
  \begin{pmatrix}
    S_G(s\qt HG,t\qt KG)
  \end{pmatrix}_{\substack{H,K \in [\cat SG{}] \\ 1 \leq s,t \leq n}}
  \begin{pmatrix}
    \vdots \\ -\rchi(\Pi^*(s\qt HG)^G) \\ \vdots
  \end{pmatrix}_{\substack{S \in [\cat SG{}] \\ 1 \leq s \leq n}} =
  \begin{pmatrix}
   0 \\ 1 \\ \vdots \\ 1
  \end{pmatrix}
  % \begin{pmatrix}
  %   \vdots \\ -\rchi(\Pi^*(sS)^G) \\ \vdots
  % \end{pmatrix} =
  % \begin{pmatrix}
  %   S_G(sS,tT)
  % \end{pmatrix}^{-1}
  % \begin{pmatrix}
  %   1 \\ \vdots \\ 1
  % \end{pmatrix}
\end{equation}
we see that minus the reduced \Euc s of the $G$-partitions of the isotypical
$G$-sets are a \we\ for the Stirling matrix of $G$.
Equation~\eqref{eq:wematrix} comes with the caveat that the top entry of the
left column vector is $0$ and not $-\rchi(\Pi^*(1\qt GG)^G)=1$.

\begin{exmp}[$G$-Stirling matrices of degree $1$]\label{exmp:Stmatrixdeg1}
The Stirling number for single orbits $S=\qt HG$ and $T=\qt KG$,
\begin{equation*}
  S_G(\qt HG,\qt KG) =
    \cat SG{}(H,[K])
  = \frac{|N_G(H,K)|}{|N_G(K,K)|}
  = \frac{|\cat SG{}(\qt HG,\qt KG)|}{|\cat SG{}(\qt KG)|}
  = \frac{\left| (\qt KG)^H \right|}{\left| (\qt KG)^K \right|}
  = \frac{\mathrm{TOM}(H,K)}{\mathrm{TOM}(K,K)}
\end{equation*}
is the number, $\cat SG{}(H,[K]) = |\{ L \in [K] \mid H \leq L \}|$, of
conjugates of $K$ containing $H$ \cite[Definition 3.5, Lemma
3.6]{jmm:eulercent}. This number is determined by the table of marks
$\mathrm{TOM}(H,K) = |(\qt KG)^H|$ for $G$.  Proposition~\ref{prop:GpiS} or
\cite{jmm:eulercent} show that the entries of the column vector in
Equation~\eqref{eq:wematrix} are
\begin{equation*}
  -\rchi(\Pi^*(\qt HG)^G) = -\rchi(H,G) = -\mu(H,G)
\end{equation*}
for all proper subgroups $H$ of $G$. (In any finite poset, $\mu(x,y) =
\rchi(x,y)$ whenever $x < y$ \cite[Proposition 3.8.5]{stanley97}.)

For instance, $G=\Sigma_3$ has $|[\cat S{\Sigma_3}{}]|=4$ orbits
$S_1,S_2,S_3,S_6$ of sizes $1,2,3,6$.  The $\Sigma_3$-Stirling matrix of
degree $1$ is
\begin{center}
  \begin{tabular}[t]{*{1}{>{$}c<{$}} | *{4}{>{$}c<{$}} |  *{1}{>{$}c<{$}} }
      S_{\Sigma_3}(S,T) & \qt {\Sigma_3}{\Sigma_3} &
   \qt {A_3}{\Sigma_3} & \qt {C_2}{\Sigma_3} & \qt {C_1}{\Sigma_3} 
   & -\rchi(\Pi^*(\qt H{\Sigma_3})^{\Sigma_3}) \\ \hline
   \qt {\Sigma_3}{\Sigma_3} & 1 & {} & {} & {} & 0 \\
   \qt {A_3}{\Sigma_3} & 1 & 1 & {} & {} & 1 \\
   \qt {C_2}{\Sigma_3} & 1&  0 & 1 & {} & 1 \\
   \qt{C_1}{\Sigma_3} & 1& 1 & 3 & 1 & -3 
  \end{tabular}
\end{center}
and (remembering the caveat that the top entry of the column to the far right
is $0$ when solving Equation~\eqref{eq:wematrix}) we read off that
$\mu(A_3,\Sigma_3)=-1$, $\mu(C_2,\Sigma_3)=-1$, $\mu(1,\Sigma_3)=3$.
\end{exmp}

Since $\rchi(\Pi^*(1 \qt HG)^G) = \rchi(H,G) = \mu(H,G)$ for proper subgroups
$H$ of $G$ by Proposition~\ref{prop:GpiS}, it seems natural to define the
higher \Mb\ numbers to be the solutions to the linear
equation~\eqref{eq:wematrix}.

\begin{defn}[Higher \Mb\ numbers] For every subgroup $H$ of $G$ and every
natural number $n \geq 1$ let
  \begin{equation*} \mu_n(H,G) = \rchi(\Pi^*(n\qt HG)^G)
  \end{equation*} with the convention that $\mu_1(G,G)=1$.
\end{defn}

For any group $G$, $\mu_n(G,G) = (-1)^n(n-1)! = \mu_n(1,1)$ for $n \geq 2$,
and $\mu_n(1,G) = \rchi(\Pi^*(\coprod_nG)^G)$ for $n \geq 1$. With $n=1$,
$\mu_1(H,G) = \mu(H,G)$ is the usual \Mb\ function of $\cat SG{}$ as
considered in Example~\ref{exmp:Stmatrixdeg1}.

The higher \Mb\ numbers $\mu_h(H,G)$ for $1 \leq h \leq n$ are determined by the
$G$-Stirling matrix of degree $n$. We shall now consider the problem of
determining the entries of this matrix.

Let $S(n,k)$ stand both for the poset of partitions of the $n$ element set
with $k$ blocks and for the Stirling number (Example~\ref{exmp:G=1}) of such
partitions. Then
\begin{equation*}
  S_G(n \qt HG, k\qt KG) = \sum_{\pi \in S(n,k)} \prod_{b \in \pi} 
  \frac{|\cat OG{}(\qt HG, \qt KG)|^{|b|}}{|\cat OG{}(\qt KG),\qt KG|} =
  \frac{|\cat OG{}(\qt HG, \qt KG)|^n}{|\cat OG{}(\qt KG)|^k} S(n,k) =
   \frac{\mathrm{TOM}(H,K)^n}{\mathrm{TOM}(K,K)^k} S(n,k)
\end{equation*}
In particular
% Special cases are
% \begin{equation*}
%    S_G(n S, kT)=
%    \begin{cases}
%      |S|^{n-k} S(n,k) & S \cong T \\
%      S(n,k) & |T|=1
%    \end{cases}
% \end{equation*}
% and
\begin{equation}\label{eq:SGGab}
   S_G(n S, kT)=
  \begin{cases}
    |T|^{n-k} S(n,k) & \cat OG{}(S,T) \neq  \emptyset \\ 
    0 & \cat OG{}(S,T) =  \emptyset 
  \end{cases}
\end{equation}
% \begin{equation*}
%    S_G(n \qt HG, k\qt KG)=
%   \begin{cases}
%     |G:K|^{n-k} S(n,k) & H \leq K \\ 0 & H \not\leq K
%   \end{cases}
% \end{equation*}
when $G$ is abelian.

\begin{lemma}\label{lemma:munormal}
  If $H \trianglelefteq G$ is normal in $G$, then $\mu_n(H,G) = \mu_n(1,\qt
  HG)$ for all $n \geq 1$.
\end{lemma}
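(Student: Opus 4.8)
The plan is to reduce the problem to a direct comparison of the two posets $\Pi^*(n\qt HG)^G$ and $\Pi^*(n\qt H{\qt HG})^{\qt HG}$ (writing $\bar G = \qt HG$ and identifying $H$ with the trivial subgroup of $\bar G$), since by definition $\mu_n(H,G) = \rchi(\Pi^*(n\qt HG)^G)$ and $\mu_n(1,\bar G) = \rchi(\Pi^*(n\bar G)^{\bar G})$. Because $H$ is normal, the right $G$-set $n\qt HG$ is, as a $G$-set, the inflation along $G \to \bar G$ of the free right $\bar G$-set $n\bar G$. The key observation I would establish is that a partition of the underlying set of $n\qt HG$ is a $G$-partition if and only if it is a $\bar G$-partition of $n\bar G$: on the one hand every $g \in G$ acts through its image $g H \in \bar G$, so $\bar G$-invariance implies $G$-invariance; on the other hand, since $H$ acts trivially on $n\qt HG$ (as $H$ is the isotropy subgroup of each point and the set is a disjoint union of copies of $\qt HG$), the action factors through $\bar G$, and $G$-invariance forces $\bar G$-invariance. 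Hence there is an equality, not merely an isomorphism, of posets $\Pi(n\qt HG)^G = \Pi(n\bar G)^{\bar G}$, and likewise after deleting $\widehat 0$ and $\widehat 1$. Taking reduced Euler characteristics of these identical posets gives $\mu_n(H,G) = \mu_n(1,\bar G)$ for $n \geq 1$.

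The first step I would carry out carefully is the identification of $n\qt HG$ as a $G$-set: each orbit $\qt HG$ has isotropy subgroup exactly $H$ at the base point (and $H^g = H$ at every point, by normality), so $H$ acts trivially on all of $n\qt HG$, and the residual $\bar G = \qt HG$-action is free with $n$ orbits, i.e. $n\qt HG \cong n\bar G$ as $\bar G$-sets, compatibly with the quotient map $G \twoheadrightarrow \bar G$. The second step is the if-and-only-if for partitions, using the criterion recorded in Section~\ref{sec:white}: $\pi$ is a $G$-partition iff $\pi g = \pi$ for all $g \in G$, and since $\pi g$ depends only on the permutation of the underlying set induced by $g$, which depends only on $gH$, the condition $\pi g = \pi$ for all $g \in G$ is literally the same family of conditions as $\pi \bar g = \pi$ for all $\bar g \in \bar G$. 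The third step is simply to note $\widehat 0$ and $\widehat 1$ correspond under this equality, so $\Pi^*(n\qt HG)^G = \Pi^*(n\bar G)^{\bar G}$ and their reduced Euler characteristics agree.

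I do not expect a serious obstacle here; the only point requiring a little care is the case $n=1$, where one must check the convention $\mu_1(G,G)=1$ is consistent — this is fine since if $H=G$ then $\bar G$ is trivial, $1\bar G$ is a single point, $\Pi^*$ of a one-point set is empty with $\rchi = 1$ by convention, matching $\mu_1(1,1)=1$; and for $H \lneq G$ the statement for $n=1$ is the classical fact that $\mu(H,G) = \mu_{\cat S{\bar G}{}}(1,\bar G)$, which also follows from Proposition~\ref{prop:GpiS} applied to the transitive sets $\qt HG$ and $\qt 1{\bar G} = \bar G$, whose posets of $G$- (resp. $\bar G$-) partitions are the intervals $[H,G]$ and $[1,\bar G]$, and these are isomorphic as posets via $K \mapsto \qt HK$ by the correspondence theorem. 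Mild care is also needed to make sure "$G$-partition" is being tested against the full group $G$ and not just a Sylow subgroup or similar, but that is exactly what the definition in Section~\ref{sec:white} provides, so no issue arises.
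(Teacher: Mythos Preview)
Your proposal is correct and follows essentially the same approach as the paper's proof: the paper simply observes that $H$ acts trivially on $\qt HG$ (since $Hgh = Hghg^{-1}g = Hg$ by normality), so a partition of $n\qt HG$ is a $G$-partition if and only if it is a $\qt HG$-partition, which is exactly your key step. Your write-up is more detailed (in particular the discussion of the $n=1$ convention and the correspondence-theorem remark), but the underlying argument is the same.
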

\begin{proof}
  $H$ acts trivially on $\qt HG$ as $Hgh = Hghg^{-1}g=Hg$ for all $h \in H$,
  $g \in G$. Thus a partition of $n \qt HG$ is a $G$-partition if and only if
  it is a $\qt HG$-partition.
\end{proof}

The higher \Mb\ numbers $\mu_1(H,G), \ldots, \mu_n(H,G)$ for $H \leq G$
(except for $\mu_1(G,G)$ which by decree equals $1$) solve the system
of linear equations \eqref{eq:wematrix} which we now rewrite as
\begin{equation}\label{eq:linsyst}
[\zeta]_G \otimes S_G
  \begin{bmatrix}
    0 \\ -\mu_2(G,G) \\ \vdots \\ -\mu_n(G,G) \\ \vdots \\
    -\mu_1(H,G) \\ \vdots \\ -\mu_n(H,G) \\ \vdots \\
    -\mu_1(1,G) \\ \vdots \\ -\mu_n(1,G)
  \end{bmatrix} =
\begin{bmatrix}
  0 \\ 1 \\ \vdots \\ 1 \\ \vdots \\ 1 \\ \vdots \\ 1 \\ \vdots \\ 1 \\ \vdots
  \\ 1
\end{bmatrix} 
\end{equation}
with the $G$-Stirling matrix as coefficient matrix. We shall adapt the
convention that in the Stirling matrix the groups will be listed with
decreasing order. The group $G$ itself occurs as the first group in the
Stirling matrix which is lower triangular. The first $n$ columns are made up
of the block matrices $(S(i,j)_{1 \leq i,j \leq n}$ of classical Stirling
numbers. All entries of the first column, in particular, equal
$S(n,1)=1$. Thus
\begin{equation*}
  [\zeta]_G \otimes S_G
  \begin{bmatrix}
    \mu_1(G,G) \\ \vdots \\ \mu_n(G,G) \\ \vdots \\ \mu_1(1,G) \\ \vdots \\
    \mu_n(1,G)  
  \end{bmatrix} = 
  \begin{bmatrix}
    1 \\ 1 \\ \vdots \\ 1
  \end{bmatrix} -
  \begin{bmatrix}
    0 \\ 1 \\ \vdots \\ 1
  \end{bmatrix} =
  \begin{bmatrix}
    1 \\ 0 \\ \vdots \\ 0
  \end{bmatrix} 
\end{equation*}
or
\begin{equation}\label{eq:Stirlingfirst}
    \begin{bmatrix}
    \mu_1(G,G) \\ \vdots \\ \mu_n(G,G) \\ \vdots \\ \mu_1(1,G) \\ \vdots \\
    \mu_n(1,G)  
  \end{bmatrix} =
   ([\zeta]_G \otimes S_G)^{-1} 
  \begin{bmatrix}
    1 \\ 0 \\ \vdots \\ 0
  \end{bmatrix}
\end{equation}
The entries of the inverted matrix $ ([\zeta]_G \otimes S_G)^{-1}$ are the
{\em $G$-Stirling numbers of the first kind} \cite[p 36]{stanley97}.

\begin{exmp}[Higher \Mb\ numbers of the trivial group]\label{exmp:G=1}
  The $C_1$-Stirling matrix of the second kind
  (Definition~\ref{defn:GStirling}) is the matrix
 \begin{equation*}
   S=
    \begin{bmatrix}
      1 & {} & {} & {} & {} & {} \\
      1 & 1 & {} & {} & {} & {}  \\
      1 & 3 & 1 & {} & {} & {} \\
      1 & 7 & 6 & 1 & {} & {}  \\
      1 & 15 & 25 & 10 & 1 & {} \\
      1 & 31 & 90 & 65 & 15 & 1 
  \end{bmatrix}
 \end{equation*}
 of classical Stirling numbers $ S(n,k) = | \{ \pi \in \Pi_n \mid |\pi| = k
 \}$ of the second kind. The higher \Mb\ numbers of the trivial group are by
 Equation~\eqref{eq:Stirlingfirst} equal to the Stirling numbers of the first
 kind \cite[p 36]{stanley97}
\begin{equation*}
  \mu_n(1,1) = (S^{-1})(n,1) = s(n,1)=(-1)^{n-1} (n-1)!, \qquad n \geq 1
\end{equation*}
We have re-derived the classical formula \cite[Example 3.10.4]{stanley97} for
the reduced \Euc\ of the partition poset.
\end{exmp}

\begin{lemma}\label{lemma:muabelian}
  If the group $G$ is abelian then
  \begin{equation*}
    \mu_n(H,G) = \mu(H,G)|G:H|^{n-1}\mu_n(1,1)
  \end{equation*}
  for all $n \geq 1$ and all subgroups $H \leq G$.
%%%%%%%%%%%%%%%
  % The higher \Mb\ numbers of the cyclic group $C_d$ of order $d \geq 1$ are
  % \begin{equation*}
  % \mu_n(1,C_d) = d^{n-1}\mu(d)\mu_n(1,1)  
  % \end{equation*}
\end{lemma}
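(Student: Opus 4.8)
The plan is to reduce everything to the trivial-group case already handled in Example~\ref{exmp:G=1} by exploiting the factorization of $G$-partitions of an isotypical $G$-set into partitions of the index set together with an independent choice of $G$-orbit structure on the blocks. Concretely, since $G$ is abelian, every subgroup $H$ is normal, so by Lemma~\ref{lemma:munormal} we have $\mu_n(H,G) = \mu_n(1,\qt HG)$ and it suffices to treat the case $H=1$, i.e.\ to show $\mu_n(1,G) = \mu(1,G)\,|G|^{n-1}\mu_n(1,1)$. The key input is formula~\eqref{eq:SGGab}, which for abelian $G$ gives $S_G(nS,kT) = |T|^{n-k}S(n,k)$ whenever $\cat OG{}(S,T)\neq\emptyset$ and $0$ otherwise; in particular, for $S=T=\qt 1G=G$ itself we get $S_G(nG,kG) = |G|^{n-k}S(n,k)$, and the only orbits receiving a surjection from $G$ are the transitive ones, which for abelian $G$ are classified by subgroups.

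First I would restrict the linear system~\eqref{eq:linsyst} to the rows and columns indexed by the subgroup $H=1$ (the bottom block), using that the $G$-Stirling matrix is block lower-triangular when subgroups are listed in decreasing order, so the $\mu_\bullet(1,G)$ variables are determined by the equations in the $H=1$ block once the higher blocks are known; but in fact the relevant equation~\eqref{eq:we} can be read directly. Writing $q = |G|$ and using $S_G(nG,kG) = q^{n-k}S(n,k)$, Equation~\eqref{eq:we} becomes
\begin{equation*}
  1 = \sum_{k\geq 2} -\rchi(\Pi^*(kG)^G)\, q^{n-k}S(n,k)
    = \sum_{k\geq 2} -\mu_k(1,G)\, q^{n-k}S(n,k), \qquad n\geq 2,
\end{equation*}
together with the trivial case $n=1$. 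Multiplying through by $q$ and substituting the Ansatz $\mu_k(1,G) = \mu(1,G)\,q^{k-1}\mu_k(1,1)$ for $k\geq 2$ and checking $\mu_1(1,G)=\mu(1,G)$ separately, the powers of $q$ telescope: $q^{k-1}\cdot q^{n-k} = q^{n-1}$, so the identity to verify collapses to
\begin{equation*}
  1 = -\mu(1,G)\,q^{n-1}\,\mu_1(1,G)\,q^{0}\cdot[\text{correction for }k=1] + \mu(1,G)\,q^{n-1}\sum_{k\geq 1} -\mu_k(1,1)S(n,k),
\end{equation*}
and by Example~\ref{exmp:G=1} the sum $\sum_{k\geq 1} -\mu_k(1,1)S(n,k) = \sum_k -s(k,1)\,\ldots$ — more cleanly, $\sum_{k} \mu_k(1,1)S(n,k)$ is the $(n,1)$ entry of $S^{-1}S = I$, hence equals $\delta_{n,1}$; after isolating the $k=1$ term (where $S(n,1)=1$ and $\mu_1(1,1)=-1$) one gets $\sum_{k\geq 2}-\mu_k(1,1)S(n,k) = 1$ for $n\geq 2$, which matches.

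The one genuine subtlety — and what I expect to be the main obstacle — is bookkeeping the $k=1$ versus $k\geq 2$ boundary and the role of $\mu_1(1,1)=-1$ versus the decreed value $\mu_1(G,G)=1$: the Ansatz $\mu_n(H,G)=\mu(H,G)|G:H|^{n-1}\mu_n(1,1)$ at $n=1$ reads $\mu_1(H,G)=\mu(H,G)\mu_1(1,1)=-\mu(H,G)$, which is \emph{not} $\mu(H,G)$, so one must check the sign convention: indeed $\mu_1(1,1)=-\rchi(\Pi^*(1\cdot 1)) = -(-1) = 1$ wait — here $\rchi(\Pi^*$ of a one-point set$) = \rchi(\emptyset) = -1$, so $\mu_1(1,1)=1$ after all, consistent with $\mu_1(1,G)=\mu(1,G)$. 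Once this is pinned down, the proof is the telescoping computation above; I would present it as: (i) reduce to $H=1$ via Lemma~\ref{lemma:munormal}; (ii) plug~\eqref{eq:SGGab} into~\eqref{eq:we}; (iii) substitute the Ansatz and observe the powers of $q=|G|$ factor out uniformly; (iv) invoke Example~\ref{exmp:G=1}'s identity $\sum_{k\geq 2}(-1)^k S(n,k)(k-1)! = 1$ to close the induction, the base case $n=1$ being the convention $\mu_1(1,G)=\mu(1,G)$.
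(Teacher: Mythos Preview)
Your strategy---substitute the claimed formula into the defining linear system and verify it---is a legitimate alternative to the paper's argument, which instead inverts the $G$-Stirling matrix in closed form: for abelian $G$ the matrix has block entries $\zeta(H,K)\,|G{:}K|^{i-j}S(i,j)$, and one checks directly that its inverse has block entries $\mu(H,K)\,|G{:}H|^{i-j}S^{-1}(i,j)$, whose first column is $\mu(H,G)\,|G{:}H|^{i-1}\mu_i(1,1)$ via Equation~\eqref{eq:Stirlingfirst}. That route handles all $H$ simultaneously with no induction and no boundary bookkeeping.

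The genuine gap in your write-up is the displayed equation
\[
  1 \;=\; \sum_{k\ge 2}\,-\mu_k(1,G)\,q^{\,n-k}S(n,k).
\]
Equation~\eqref{eq:we} at $S=\qt 1G$ sums over \emph{all} orbit types $T=\qt KG$ with $1\le K\le G$, not just $T=G$: since $\cat OG{}(\qt 1G,\qt KG)\neq\emptyset$ for every $K$, the correct equation is
\[
  1 \;=\; \sum_{1\le K\le G}\ \sum_{k:\,k|G{:}K|>1}\,-\mu_k(K,G)\,|G{:}K|^{\,n-k}S(n,k),
\]
and reducing to $H=1$ via Lemma~\ref{lemma:munormal} does not eliminate the $K\ne 1$ terms. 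You can repair this in one of two ways. Either run an induction on $|G|$, so that the $K\ne 1$ terms are covered by the hypothesis applied to $G/K$; or---cleaner---drop the reduction to $H=1$ entirely and verify the Ansatz $\mu_k(K,G)=\mu(K,G)\,|G{:}K|^{k-1}\mu_k(1,1)$ for all $K$ at once. Substituting, the factor $|G{:}K|^{n-1}$ comes out of each inner sum; for $K=G$ the inner sum is $\sum_{k\ge 2}\mu_k(1,1)S(n,k)=-1$ (using $\sum_{k\ge 1}\mu_k(1,1)S(n,k)=\delta_{n,1}$ from Example~\ref{exmp:G=1} and $\mu_1(1,1)=1$), contributing exactly $1$; for $H\le K<G$ the inner sum is $\sum_{k\ge 1}\mu_k(1,1)S(n,k)=0$ for $n\ge 2$, contributing nothing. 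That is precisely your step~(iv), but applied to the full system rather than to a truncated one.
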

\begin{proof}
   Since $G$ is abelian, $S_G(i\qt HG,j\qt KG) = |G:K|^{i-j}S(i,j)$ by
  Equation~\eqref{eq:SGGab}, and the $G$-Stirling matrix of degree $n$ is the
  block matrix
  \begin{equation*}
    \left(
      (\zeta(H,K) |G:K]^{i-j} S(i,j))_{1 \leq i,j \leq n} \right)_{H,K \in [\cat SG{}]}
  \end{equation*}
  The vector $\big((\mu_i(H,G))_{1 \leq i \leq n}\big)_{H \in \cat SG{}}$ is
  (Equation~\eqref{eq:Stirlingfirst}) the first column
  \begin{equation*}
    \big([\mu](H,K) |G:H]^{i-1} S^{-1}(i,1))_{1 \leq i \leq n}\big)_{H \in
      [\cat SG{}]} = 
     \big(\mu(H,K) |G:H]^{i-1} \mu_i(1,1))_{1 \leq i \leq n}\big)_{H \in
      [\cat SG{}]}
  \end{equation*}
  in the inverse matrix %% \mynote{justify?}
  \begin{equation*}
    \left(
(\mu(H,K) |G:H]^{i-j} S^{-1}(i,j))_{1 \leq i,j \leq n} \right)_{H,K \in [\cat SG{}]}
  \end{equation*}
 of the $G$-Stirling matrix.
\end{proof}

In the example below we consider an example of a Stirling matrix for a
non-abelian group.

\begin{exmp}%[The $\Sigma_3$-Stirling matrix of degree $3$]
% For $G=\Sigma_3$ the Stirling matrix for the free orbit $\Sigma_3$ and its \we\ are
% \begin{center}
%     \begin{tabular}[t]{ >{$}c<{$} | *{3}{>{$}c<{$}} | >{$}c<{$} } 
%    S_{\Sigma_3}(S,T) &
%    \qt {C_3}{\Sigma_3} & \qt {C_2}{\Sigma_3} & \qt {C_1}{\Sigma_3} 
%    & -\rchi(\Pi^*(S)^{\Sigma_3}) \\ \hline
%    \qt {C_3}{\Sigma_3} & 1 & 0 & 0 & 1 \\
%    \qt {C_2}{\Sigma_3} & 0 & 1 & 0 & 1 \\
%    \qt{C_1}{\Sigma_3} & 1 & 3 & 1 & -3 
%     \end{tabular}
% \end{center}
%$G=\Sigma_3$ has $|[\cat S{\Sigma_3}{}]|=4$ orbits $S_1,S_2,S_3,S_6$ of sizes
%$1,2,3,6$.  
The $\Sigma_3$-Stirling matrix of degree $3$ (reusing the notation
of Example~\ref{exmp:Stmatrixdeg1}) is
% \begin{center}
%    \begin{tabular}[t]{*{1}{>{$}c<{$}} |  *{3}{>{$}c<{$}} |
%    *{3}{>{$}c<{$}} | *{3}{>{$}c<{$}} | *{3}{>{$}c<{$}} |  *{1}{>{$}c<{$}} }
% S_{\Sigma_3}(S,T) & 1S_1 & 1S_1 & 3S_1 & 
% 1S_2 & 2S_2 & 3S_2 & 1S_3&2S_3&3S_3 &1S_6&2S_6&3S_6 & -\chi(\Pi^*(S)^G) \\ \hline 
% 1S_1 &  1  & 0  & 0  & 0  & 0  & 0  & 0  & 0  & 0  & 0  & 0  & 0 & 0 \\
% 2S_1 &  1  & 1  & 0  & 0  & 0  & 0  & 0  & 0  & 0  & 0  & 0  & 0 &1 \\
% 3S_1 &  1  & 3  & 1  & 0  & 0  & 0  & 0  & 0  & 0  & 0  & 0  & 0 & -2\\ \hline
% 1S_2 &  1  & 0  & 0  & 1  & 0  & 0  & 0  & 0  & 0  & 0  & 0  & 0& 1\\ 
% 2S_2 &  1  & 1  & 0  & 2  & 1  & 0  & 0  & 0  & 0  & 0  & 0  & 0 & -2\\
% 3S_2 &  1  & 3  & 1  & 4  & 6  & 1  & 0  & 0  & 0  & 0  & 0  & 0 & 8\\ \hline
% 1S_3 &  1  & 0  & 0  & 0  & 0  & 0  & 1  & 0  & 0  & 0  & 0  & 0 & 1\\
% 2S_3 &  1  & 1  & 0  & 0  & 0  & 0  & 1  & 1  & 0  & 0  & 0  & 0 & -1\\ 
% 3S_3 &  1  & 3  & 1  & 0  & 0  & 0  & 1  & 3  & 1  & 0  & 0  & 0 & 2\\ \hline
% 1S_6 &  1  & 0  & 0  & 1  & 0  & 0  & 3  & 0  & 0  & 1  & 0  & 0 & -3\\
% 2S_6 &  1  & 1  & 0  & 2  & 1  & 0  & 9  & 9  & 0  & 6  & 1  & 0 & 18 \\
% 3S_6 &  1  & 3  & 1  & 4  & 6  & 1  &27  &81  &27  &36  &18  & 1 & -216
% \end{tabular}
% \end{center}
\begin{center}
   \begin{tabular}[t]{*{1}{>{$}c<{$}} |  *{3}{>{$}c<{$}} |
   *{3}{>{$}c<{$}} | *{3}{>{$}c<{$}} | *{3}{>{$}c<{$}} |  *{1}{>{$}c<{$}} }
S_{\Sigma_3}(S,T) & 1S_1 & 2S_1 & 3S_1 & 
1S_2 & 2S_2 & 3S_2 & 1S_3&2S_3&3S_3 &1S_6&2S_6&3S_6 
  &  -\mu_i(H,\Sigma_3) \\ \hline %%-\chi(\Pi^*(S)^G) 
1S_1 &  1  & 0  & 0  & {}  & {}  & {}  & {}  & {}  & {}  & {}  & {}  & {} & 0 \\
2S_1 &  1  & 1  & 0  & {}  & {}  & {}  & {}  & {}  & {}  & {}  & {}  & {} &1 \\
3S_1 &  1  & 3  & 1  & {}  & {}  & {}  & {}  & {}  & {}  & {}  & {}  & {} & -2\\ \hline
1S_2 &  1  & 0  & 0  & 1  & 0  & 0  & {}  & {}  & {}  & {}  & {}  & {}& 1\\ 
2S_2 &  1  & 1  & 0  & 2  & 1  & 0  & {}  & {}  & {}  & {}  & {}  & {} & -2\\
3S_2 &  1  & 3  & 1  & 4  & 6  & 1  & {}  & {}  & {}  & {}  & {}  & {} & 8\\ \hline
1S_3 &  1  & 0  & 0  & {}  & {}  & {}  & 1  & 0  & 0  & {}  & {}  & {} & 1\\
2S_3 &  1  & 1  & 0  & {}  & {}  & {}  & 1  & 1  & 0  & {}  & {}  & {} & -1\\ 
3S_3 &  1  & 3  & 1  & {}  & {}  & {}  & 1  & 3  & 1  & {}  & {}  & {} & 2\\ \hline
1S_6 &  1  & 0  & 0  & 1  & 0  & 0  & 3  & 0  & 0  & 1  & 0  & 0 & -3\\
2S_6 &  1  & 1  & 0  & 2  & 1  & 0  & 9  & 9  & 0  & 6  & 1  & 0 & 18 \\
3S_6 &  1  & 3  & 1  & 4  & 6  & 1  &27  &81  &27  &36  &18  & 1 & -216
\end{tabular}
\end{center}
We read off that $\mu_n(A_3,\Sigma_3) = \mu_n(1,C_2) = -2^{n-1}\mu_n(1,1)$
(Lemma~\ref{lemma:munormal}) and that $\mu_n(1,\Sigma_3) =
-3^n\mu_n(1,1)$. This last result shows that Lemma~\ref{lemma:muabelian} does
not in general extend to non-abelian groups.
% \mynote{But still there is the same kind of exponential relation between \Mb\
%   numbers...}
% \begin{verbatim}
% /home/vqd218/projects/euler/magma/partitions/partition.02.prg StirlingMatrix
% \end{verbatim}
\end{exmp}

\section{Equivariant \Euc s of $G$-posets}
\label{sec:eucs}

Let $\Pi$ be a finite $G$-poset. The $r$th, $r \geq 1$, equivariant \Euc\ of
$\Pi$ is \cite{atiyah&segal89} \cite[Proposition 2.9]{jmm:eulercent}
\begin{equation*}
  \chi_r(\Pi,G) = \frac{1}{|G|} \sum_{X \in C_r(G)}\chi(\Pi^X) 
  = \frac{1}{|G|} \sum_{A \leq G}\chi(\Pi^A)\varphi_r(A) 
\end{equation*}
The first sum runs over the set $C_r(G)$ of all commuting $r$-tuples
$X=(x_1,\ldots,x_r)$ of elements of $G$. The second sum runs over all abelian
subgroups $A$ of $G$ and $\varphi_r(A)$ is the number of generating $r$-tuples
$(a_1,\ldots,a_r)$ of elements of $A$ \cite{hall36} \cite[Remark
2.20]{jmm:eulercent}.

We now specialize from general poset to posets of partitions.
Let $S$ be a finite $G$-set, $\Pi(S)$ the $G$-poset of partitions of $G$, and 
 $\Pi^*(S)=\Pi(S)-\{\widehat 0, \widehat 1 \}$ the $G$-poset of non-extreme
 partitions of $S$.

\begin{comment}
Problem connecting to a software origin
Download (curl) erro for'
'http://geeko.ioda.net/mirror/amd-fglrx/openSUSE_12.3/repodata/repomd.xml':
Could not resolve host:geekp.ioda.net;
Unknown error
\end{comment}

\begin{defn}
  The group $G$ acts effectively on $S$ if only the trivial element of $G$
  fixes all elements of $S$.
\end{defn}

\begin{lemma}\label{lemma:Aisotypical}
Suppose that the abelian group $A$ acts effectively on $S$.
The following conditions are equivalent:
\begin{enumerate}
\item $A$ acts isotypically on $S$ \label{lemma:Aisotypical1}
\item $A$ acts freely on $S$ \label{lemma:Aisotypical2} 
\item The degree of any non-identity element of $A$ is
  $|S|$ \label{lemma:Aisotypical3} 
\item The cycle structure of any element of $A$ is $d^m$ for some natural
  numbers $d$ and $m$ with $dm=|S|$ \label{lemma:Aisotypical4}
\end{enumerate}
\end{lemma}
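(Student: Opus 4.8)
The plan is to prove the equivalences by a cycle of implications, say $(2) \Rightarrow (3) \Rightarrow (4) \Rightarrow (1) \Rightarrow (2)$, using the hypothesis that $A$ is abelian and acts effectively. The main tool throughout is the observation that for an abelian group $A$ acting on $S$, the isotropy subgroups $\isot xA$ are preserved under the $A$-action (since $\isot{xa}A = \isot xA^a = \isot xA$), so each isotropy subgroup is constant on $A$-orbits and, more importantly, the subgroup $\bigcap_{x \in S}\isot xA$ is the kernel of the action, which is trivial by effectiveness.

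First I would show $(2) \Rightarrow (3)$: if $A$ acts freely then every isotropy subgroup is trivial, so every non-identity element $a \in A$ moves every point, i.e. its degree (the number of points it moves) is $|S|$. For $(3) \Rightarrow (4)$: fix $a \in A$, $a \neq 1$. Since $A$ is abelian, $a$ commutes with every element of $A$, and one shows that the cyclic group $\gen a$ acts with all orbits of the same size: if $x, y \in S$ lie in $\gen a$-orbits of sizes $d_1 < d_2$ respectively, then $a^{d_1}$ fixes $x$ but not $y$ (as $d_1 < d_2$ and $d_1 \mid$ ... actually one needs $d_1 \mid d_2$ — instead argue via: $a^{d_1}$ fixes $x$, so if $a^{d_1} \neq 1$ its degree is less than $|S|$, contradicting (3); hence $a^{d_1} = 1$, forcing all orbit sizes to divide $d_1$, so $d_1$ is the common orbit size). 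Thus the cycle structure of $a$ is $d^m$ with $dm = |S|$; the identity trivially has cycle structure $1^{|S|}$.

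Next, $(4) \Rightarrow (1)$: we must show $A$ acts isotypically, i.e. all isotropy subgroups $\isot xA$ are conjugate — but $A$ is abelian, so this just means all isotropy subgroups are equal. Suppose $\isot xA \neq \isot yA$, so there is $a \in A$ with (say) $a \in \isot xA \setminus \isot yA$, $a \neq 1$. Then $a$ fixes $x$ but not $y$, so the cycle structure of $a$ has both a fixed point and a nontrivial cycle, contradicting the form $d^m$ (which has either all fixed points, meaning $a = 1$, or no fixed points). Hence all isotropy subgroups coincide; call the common subgroup $A_0 = \bigcap_x \isot xA$, which is the kernel of the action, hence trivial by effectiveness — so in fact this argument simultaneously gives $(1) \Rightarrow (2)$ as well, since isotypical for abelian $A$ plus effective forces every $\isot xA = A_0 = 1$, i.e. the action is free. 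I would close the loop by stating $(1) \Rightarrow (2)$ explicitly as: isotypical means all $\isot xA$ equal, their common value is the kernel, effectiveness makes the kernel trivial, so the action is free.

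The main obstacle is the step $(3) \Rightarrow (4)$, where the combinatorial bookkeeping about cycle structures of powers of $a$ must be handled carefully — one has to rule out mixed orbit sizes for $\gen a$ using only the hypothesis that \emph{every} non-identity element of $A$ (not just $a$ itself) has full degree, exploiting that powers of $a$ lie in $A$. Once that is in place, everything else is a short formal argument resting on the fact that for abelian $A$ "isotypical" degenerates to "all isotropy subgroups equal," and "effective" kills that common subgroup. I would keep the exposition terse, perhaps even merging $(1) \Leftrightarrow (2)$ into a single remark once the abelian observation is recorded.
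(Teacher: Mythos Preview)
Your proposal is correct and more thorough than the paper's own proof. The paper only writes out the implication $(1)\Rightarrow(2)$: since $A$ is abelian, isotypical means all isotropy subgroups are the \emph{same} subgroup $B$; then $B$ acts trivially on $S$ and effectiveness forces $B=1$, so the action is free. This is exactly your argument for $(1)\Rightarrow(2)$. The paper then simply records the consequence $S\cong m(\qt 1A)$ and leaves the remaining equivalences (involving degrees and cycle structures) unproved, presumably as routine.

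Your cycle $(2)\Rightarrow(3)\Rightarrow(4)\Rightarrow(1)\Rightarrow(2)$ fills in what the paper omits. The only place requiring care is $(3)\Rightarrow(4)$, and your corrected argument is fine: if $d_1$ is the smallest $\langle a\rangle$-orbit size then $a^{d_1}$ fixes that orbit, so by (3) applied to the element $a^{d_1}\in A$ one gets $a^{d_1}=1$; hence every orbit size divides $d_1$ and so equals $d_1$. Your $(4)\Rightarrow(1)$ is also clean. So the two approaches agree on the one step the paper bothers to prove, and yours supplies the rest; nothing is lost by your more explicit treatment, and in fact it makes the lemma self-contained.
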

If $A$ acts isotropically on $S$ then the order of $A$ divides $|S|$.
\begin{proof}
%  If $A$ acts isotypically on $S$ and $A$ is abelian, all isotropy subgroups
%  are identical. 
%%%%%%%%%%
  If $A$ acts isotypically and $A$ is abelian, the isotropy subgroup at any
  point of $S$ is the same subgroup, $B$, of $A$. The group $B$ acts trivially
  on $S$, so $B$ is the trivial subgroup since the action is effective. Thus
  $A$ acts freely on $S$.  

  If $A$ acts isotropically on $S$ then $S=m \qt 1A$ as right $A$-sets and
  $|S|=m|A|$.
\end{proof}

\begin{lemma}\label{lemma:key}
   Let $A$ be any abelian subgroup of $\Sigma_n$ acting freely on
$\qt{\Sigma_{n-1}}{\Sigma_n}$. Put $m=\frac{n}{|A|}$.
\begin{enumerate}
\item \label{lemma:key1} The number of conjugates of $A$ in $\Sigma_n$ is
  \begin{equation*}
  |\Sigma_n : N_{\Sigma_n}(A)| = \frac{1}{|\Aut{}A|} \frac{n!}{|A|^mm!}
  \end{equation*}
\item
  $\rchi(\Pi^*(\qt{\Sigma_{n-1}}{\Sigma_n})^A)=(-1)^{m-1}\mu(1,A)|A|^{m-1}(m-1)!$
  when $n \geq 2$. \label{lemma:key2}
\item \label{lemma:key3} $\rchi(\Pi^*(\qt{\Sigma_{n-1}}{\Sigma_n})^A)
  |\Sigma_n : N_{\Sigma_n}(A)| = -(-1)^{n/|A|}\mu(1,A)\frac{1}{|\Aut{}A|}(n-1)!$
\end{enumerate}
\end{lemma}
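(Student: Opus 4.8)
The plan is to establish \eqref{lemma:key1} and \eqref{lemma:key2} separately and then obtain \eqref{lemma:key3} by multiplying the two. For part~\eqref{lemma:key1}, the key observation is that, since $A$ acts freely on the $n$-element set $\qt{\Sigma_{n-1}}{\Sigma_n}$, this $A$-set is isomorphic to $m\qt1A=\coprod_mA$, and that this is the only free $A$-set of cardinality $n$ up to isomorphism (cf.\ Lemma~\ref{lemma:Aisotypical}). Hence a subgroup of $\Sigma_n$ is conjugate to $A$ exactly when it is abstractly isomorphic to $A$ and acts freely on $\qt{\Sigma_{n-1}}{\Sigma_n}$: one implication is trivial, and for the other, two such subgroups determine isomorphic free $A$-set structures on the same set and so are conjugate by the intertwining permutation. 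I would then count the set $F$ of injective homomorphisms $\rho\colon A\to\Sigma_n$ with free image. The group $\Sigma_n$ acts on $F$ transitively by post-conjugation, and the stabilizer of $\rho$ is the centralizer of $\rho(A)$ in $\Sigma_n$, i.e.\ the group of $A$-equivariant permutations of $\coprod_mA$, which is the wreath product $A\wr\Sigma_m$ of order $|A|^mm!$; hence $|F|=n!/(|A|^mm!)$. Finally $\rho\mapsto\rho(A)$ maps $F$ onto the conjugacy class of $A$ with every fibre of size $|\Aut{}A|$ (the isomorphisms $A\to\rho(A)$), so the number of conjugates, which equals $|\Sigma_n:N_{\Sigma_n}(A)|$, is $|F|/|\Aut{}A|=\frac{1}{|\Aut{}A|}\frac{n!}{|A|^mm!}$.

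For part~\eqref{lemma:key2}, the same isomorphism $\qt{\Sigma_{n-1}}{\Sigma_n}\cong m\qt1A$ of $A$-sets identifies $\Pi^*(\qt{\Sigma_{n-1}}{\Sigma_n})^A$ with $\Pi^*(m\qt1A)^A$, whose reduced \Euc\ is by definition the higher \Mb\ number $\mu_m(1,A)$. Since $A$ is abelian, Lemma~\ref{lemma:muabelian} gives $\mu_m(1,A)=\mu(1,A)\,|A|^{m-1}\mu_m(1,1)$, and $\mu_m(1,1)=(-1)^{m-1}(m-1)!$ by Example~\ref{exmp:G=1}, valid for every $m\geq1$ thanks to the convention $\mu_1(1,1)=1$; the hypothesis $n\geq2$ serves only to exclude the degenerate case $n=1$, the boundary case $m=1$ (i.e.\ $|A|=n$) being fine. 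Multiplying these gives $\rchi(\Pi^*(\qt{\Sigma_{n-1}}{\Sigma_n})^A)=(-1)^{m-1}\mu(1,A)|A|^{m-1}(m-1)!$.

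For part~\eqref{lemma:key3} I would simply multiply the formulas from \eqref{lemma:key2} and \eqref{lemma:key1}: the quotients $\frac{(m-1)!}{m!}=\frac1m$ and $\frac{|A|^{m-1}}{|A|^m}=\frac1{|A|}$ combine with the factor $n!$ into $\frac{n!}{m|A|}=(n-1)!$ because $m|A|=n$, and $(-1)^{m-1}=-(-1)^m=-(-1)^{n/|A|}$, yielding $-(-1)^{n/|A|}\mu(1,A)\frac1{|\Aut{}A|}(n-1)!$ as asserted. Everything here is bookkeeping except for the identification in \eqref{lemma:key1} of the centralizer of a free $A$-action with $A\wr\Sigma_m$, which I expect to be the only point requiring genuine care, together with keeping track of the $m=1$ endpoint throughout.
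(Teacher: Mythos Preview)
Your proposal is correct and follows essentially the same approach as the paper: parts~\eqref{lemma:key2} and~\eqref{lemma:key3} are identical to the paper's argument, and for part~\eqref{lemma:key1} you compute the centralizer $C_{\Sigma_n}(A)=A\wr\Sigma_m$ and then divide the count of free embeddings by $|\Aut{}A|$, whereas the paper packages this into the single statement that the normalizer is the holomorph $(A\wr\Sigma_m)\rtimes\Aut{}A$; the two computations are two faces of the same fact and yield the same index.
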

\begin{proof}
  \eqref{lemma:key1} It is a standard result that the normalizer of $A$ in the
  right regular permutation representation of $A$ is the holomorph $A \rtimes
  \Aut{}A$ of $A$ \cite[pp 36--37]{robinson:groups}. Similarly, the normalizer
  of $A$ in $m$ times the right regular representation is $(A \wr \Sigma_m)
  \rtimes \Aut{}A$ of order $|\Aut{}A||A|^mm!$.

  \noindent \eqref{lemma:key2} As an $A$-set $\qt{\Sigma_{n-1}}{\Sigma_n}= m \qt 1A$
  consists of $m$ free $A$-orbits. According to Lemma~\ref{lemma:muabelian}
  \begin{equation*}
    \rchi(\Pi^*(\qt{\Sigma_{n-1}}{\Sigma_n})^A) = \rchi(\Pi^*(m\qt 1A)^A) =
    \mu(1,A)|A|^{m-1}\mu_m(1,1) = (-1)^{m-1}\mu(1,A)|A|^{m-1}(m-1)!
  \end{equation*}
  This formula also holds when $A$ is trivial group. In this case, the left
  hand side is $\rchi(\Pi^*(\qt{\Sigma_{n-1}}{\Sigma_n}))=(-1)^{n-1}(n-1)!$,
  and the right hand side is $(-1)^{n-1}(n-1)!$ as $\mu(1,1)=1$.

 \noindent  \eqref{lemma:key3} This is an immediate consequence of
 \eqref{lemma:key1} and \eqref{lemma:key2}.
\end{proof}

% \begin{thm}\label{thm:key}
%   When $n \geq 2$ and $r \geq 1$ the $r$th equvariant \Euc\ of the partition
%   poset $\Pi^*(\qt{\Sigma_{n-1}}{\Sigma_n})$ is
% \begin{equation*}
%   -\frac{1}{n} \sum_{\prod p_i^{d_i} \mid n}
%   (-1)^{\frac{n}{\prod p_i^{d_i}}} \prod_{1 \leq d_i \leq r} (-1)^{d_i}
%    \binom{[r]}{[d_i]} p_i^{\binom{d_i}2} }
% \end{equation*}
% The sum ranges over all divisors $d$ of $n$ such that $p^{r+1} \nmid d$ for
% all prime divisors $p$ of $n$.
% \end{thm}
\begin{proof}[Proof of Theorem~\ref{thm:key}]on 
 Combine the expression
 \begin{equation*}
    \rchi_r(\Pi^*(\qt{\Sigma_{n-1}}{\Sigma_{n}}),\Sigma_n) = 
     \frac{1}{n!} \sum_{\substack{ [A \leq \Sigma_n] \\ \text{$A$ free and
           abelian} }} 
    \rchi(\Pi^*(\qt{\Sigma_{n-1}}{\Sigma_{n}})^A) \varphi_r(A) 
  |\Sigma_n : N_{\Sigma_n}(A)|
 \end{equation*}
 for the $r$th equivariant \Euc\ with
 Lemma~\ref{lemma:key}.\eqref{lemma:key3}. Note also that any abelian group of
 order dividing $n$ is realizable as a unique subgroup conjugacy class in
 the symmetric group $\Sigma_n$ acting freely on $\qt
 {\Sigma_{n-1}}{\Sigma_n}$.
% %%%%%%%%%%%%%%%%%%%%%%%
%   \begin{multline*}
%      \rchi_r(\Pi^*(\qt{\Sigma_{n-1}}{\Sigma_{n}}),\Sigma_n) = 
%      \frac{1}{n!} \sum_{\substack{ [A \leq \Sigma_n] \\ \text{$A$ free and
%            abelian} }} 
%     \rchi(\Pi^*(S_n)^A) \varphi_r(A) |\Sigma_n : N_{\Sigma_n}(A)| \\ =
%     \frac{1}{n!} \sum_A (-1)^{m-1}\mu_1(1,A)|A|^{m-1}(m-1)!\varphi_r(A)
%    \frac{1}{|\Aut{}A|} \frac{n!}{|A|^mm!} =
%    \sum_A (-1)^{m-1} \frac{1}{m|A|} \frac{\mu_1(1,A)\varphi_r(A)}{|\Aut{}A|}
%    \\ = -\frac{1}{n}
%    \sum_{|A| \mid n}(-1)^{n/|A|}\mu_1(1,A)\frac{\varphi_r(A)}{|\Aut{}A|}
%   \end{multline*}
This gives
  \begin{equation*}
    \rchi_r(\Pi^*(\qt{\Sigma_{n-1}}{\Sigma_{n}}),\Sigma_n) = 
    -\frac{1}{n}\sum_{|A| \mid n} (-1)^{n/|A|} \mu(1,A) 
    \frac{\varphi_r(A)}{|\Aut{}A|}
  \end{equation*}
  where the sum ranges over the set of isomorphism classes of abelian groups
  $A$ of order dividing $n$.  The \Mb\ function $\mu(1,A)$ is completely known
  \cite[2.8]{hall36}. Indeed, write $A=\prod A_p$ as the product of its \syl
  ps $A_p$. Then $\mu(1,A)=\prod \mu(1,A_p)$ and $\mu(1,A_p)=0$ unless $A_p$
  is an elementary abelian $p$-group. For an elementary abelian $p$-group of
  rank $d$,
\begin{equation*}
  \mu(1,C_p^d) = (-1)^d p^{\binom{d}{2}}
\end{equation*}
Suppose now that $A= \prod A_p$ where each \syl p\ $A_p=C_p^{d_p}$ is
elementary abelian of rank $d_p$.  By \cite[Lemma 2.1]{hillar_rhea}, $\Aut{}A
= \prod_p \Aut{}{A_p} = \prod_p \GL{d_p}p{}$ and clearly $\varphi_r(\prod A_p)
= \prod \varphi_r(A_p)$.  The number of surjections of $C_p^r$ onto $C_p^d$ is
\begin{equation*}
  \varphi_r(C_p^d) = \binom{r}{d}_p |\GL dp{}|
\end{equation*}
and consequently
\begin{equation*}
   \frac{\varphi_r(C_p^d)}{|\Aut{}{C_p^d}|} =
    \binom{r}{d}_p
  \end{equation*}
This finishes the proof.
\end{proof}

% The quotient $\frac{\varphi_r(A)}{|\Aut{}A|}$ is an integer since the auto\m\
% group of $A$ acts freely on the set of generating $r$-tuples from $A$.

Let $c_r(n)=(a \ast b_r)(n)$ denote Dirichlet convolution of the
multiplicative arithmetic function $a(n)$ and $b_r(n)$. The function $a$ is
$-1$ ($+1$) on any even (odd) prime power and the multiplicative function
$b_r$ has value
\begin{equation}\label{eq:brpe}
  b_r(p^e) = (-1)^e p^{\binom e2} \binom{r}{e}_p
\end{equation}
on any prime power $p^e$.
\begin{prop}\label{eq:brrecur}
The multiplicative arithmetic sequences $b_r$ are given by $b_1=\mu$ and 
 the recurrence relations
\begin{equation*}
  b_{r+1}(p^d) = p^db_r(p^d) - p^{d-1}b_r(p^{d-1})
\end{equation*}  
valid for all $r \geq 1$ and all prime powers $p^d$, $d \geq 0$.
\end{prop}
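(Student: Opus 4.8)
The plan is to verify directly from the closed formula \eqref{eq:brpe} that the sequence $(b_r)_{r\ge 1}$ introduced in Theorem~\ref{thm:key} has the two stated properties. Since every function in sight is multiplicative, it is enough to work one prime power $p^d$ at a time.

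First I would check $b_1=\mu$. On $p^e$ the formula \eqref{eq:brpe} reads $b_1(p^e)=(-1)^e p^{\binom e2}\binom 1e_p$, and because a line over $\F_p$ has a subspace of dimension $e$ only for $e\in\{0,1\}$ we have $\binom 10_p=\binom 11_p=1$ and $\binom 1e_p=0$ for $e\ge 2$. Hence $b_1(1)=1$, $b_1(p)=-1$, and $b_1(p^e)=0$ for $e\ge2$, which are exactly the values of $\mu$ on prime powers; as $b_1$ and $\mu$ are both multiplicative, $b_1=\mu$.

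Next I would reduce the recurrence to a Gaussian-binomial identity. Fix $p^d$ with $d\ge1$; the case $d=0$ is the triviality $b_{r+1}(1)=b_r(1)=1$ once one reads the term $p^{-1}b_r(p^{-1})$ as $0$, with the convention $\binom{r}{-1}_p=0$. Substituting \eqref{eq:brpe} into $p^db_r(p^d)-p^{d-1}b_r(p^{d-1})$, using $\binom d2=\binom{d-1}2+(d-1)$ so that $p^{d-1}p^{\binom{d-1}2}=p^{\binom d2}$, and using $-(-1)^{d-1}=(-1)^d$, one finds that the right-hand side equals $(-1)^dp^{\binom d2}\big(p^d\binom rd_p+\binom{r}{d-1}_p\big)$; since $b_{r+1}(p^d)=(-1)^dp^{\binom d2}\binom{r+1}{d}_p$, the claimed recurrence is equivalent to
\begin{equation*}
  \binom{r+1}{d}_p=p^d\binom{r}{d}_p+\binom{r}{d-1}_p .
\end{equation*}

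Finally I would prove this identity, which is one of the two $q$-Pascal relations for Gaussian coefficients (see \cite[Ch.~1]{stanley97}); to keep the argument self-contained I would count the $d$-dimensional subspaces $U$ of $W=\F_p^{r+1}$ according to whether a fixed line $L\subset W$ lies in $U$. If $L\subseteq U$, then $U/L$ ranges over the $(d-1)$-dimensional subspaces of $W/L\cong\F_p^r$, giving $\binom{r}{d-1}_p$. If $L\not\subseteq U$, equivalently $U\cap L=0$, then $U$ maps isomorphically onto a $d$-dimensional subspace $\bar U\subseteq W/L$, and the subspaces lying over a given $\bar U$ correspond bijectively to the linear sections $\bar U\to W$ of the projection, of which there are $|L|^{\dim\bar U}=p^d$; this gives $p^d\binom{r}{d}_p$. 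Adding the two cases yields the identity. The whole computation is routine; the only delicate point is the sign-and-exponent bookkeeping in the reduction step and the recognition of the resulting identity as $q$-Pascal, which is why I would include the short subspace count rather than merely cite it.
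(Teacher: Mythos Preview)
Your argument is correct and follows essentially the same route as the paper: substitute \eqref{eq:brpe}, use $\binom d2=\binom{d-1}2+(d-1)$ to align the powers of $p$, and reduce to the $q$-Pascal identity $\binom{r+1}{d}_p=p^d\binom{r}{d}_p+\binom{r}{d-1}_p$. The only differences are that you additionally verify $b_1=\mu$, treat the boundary case $d=0$, and supply a self-contained subspace count for $q$-Pascal, whereas the paper simply cites both Pascal identities and leaves the substitution to the reader.
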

\begin{proof}
  Use Pascal's identities for ordinary and Gaussian binomial coefficients
  \cite[Equation 17b]{stanley97}
\begin{equation*}
  \binom d2 = \binom{d-1}2+(d-1), \qquad
  \binom{r+1}d_p = p^d\binom rd_p + \binom r{d-1}_p
\end{equation*}
and the definition \eqref{eq:brpe} of $b_r$.  
\end{proof}

In the following proposition, $1$ is the constant sequence with value $1$ on
all $n \geq 1$.

\begin{cor} \label{cor:dirichletb}
$(1 \ast b_{r+1})(n) = nb_r(n)$ for all $r,n \geq 1$.
 \end{cor}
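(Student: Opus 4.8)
The plan is to reduce the claimed identity $(1 \ast b_{r+1})(n) = n b_r(n)$ to the recurrence for $b_r$ already established in Proposition~\ref{eq:brrecur}. Since both sides of the asserted identity are multiplicative arithmetic functions of $n$ — the left side because it is a Dirichlet convolution of the multiplicative function $1$ with the multiplicative function $b_{r+1}$, and the right side because $n \mapsto n$ and $b_r$ are each multiplicative — it suffices to verify the identity on prime powers $p^d$, $d \geq 0$. So the first step is to record this multiplicativity reduction explicitly.

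Next I would expand the left side at a prime power. By definition of Dirichlet convolution, $(1 \ast b_{r+1})(p^d) = \sum_{i=0}^{d} b_{r+1}(p^i)$, a telescoping-friendly sum. The idea is to substitute the recurrence $b_{r+1}(p^i) = p^i b_r(p^i) - p^{i-1} b_r(p^{i-1})$ from Proposition~\ref{eq:brrecur} into each term with $i \geq 1$, and handle $i=0$ separately (where $b_{r+1}(p^0) = b_{r+1}(1) = 1$, and one checks $p^0 b_r(p^0) - p^{-1}b_r(p^{-1})$ is interpreted so the recurrence still reads $1 = 1 - 0$, i.e.\ the convention $b_r(p^{-1})=0$). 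Then
\begin{equation*}
  \sum_{i=0}^{d} b_{r+1}(p^i) = \sum_{i=0}^{d}\bigl(p^i b_r(p^i) - p^{i-1}b_r(p^{i-1})\bigr)
\end{equation*}
telescopes to $p^d b_r(p^d) - p^{-1}b_r(p^{-1}) = p^d b_r(p^d)$, which is exactly the value of $n b_r(n)$ at $n = p^d$. This completes the verification on prime powers, and hence on all $n$ by multiplicativity.

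I do not anticipate a genuine obstacle here; the only point requiring a little care is the boundary term $i=0$ and making sure the telescoping is stated cleanly (one can avoid the awkward $b_r(p^{-1})$ by peeling off $i=0$ and running the telescoping sum from $i=1$ to $d$, noting the $i=1$ term contributes $-p^0 b_r(p^0) = -1$ which cancels the $i=0$ term $b_{r+1}(1)=1$). Everything else is a routine application of the already-proved recurrence in Proposition~\ref{eq:brrecur}.
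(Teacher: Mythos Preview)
Your proposal is correct and follows essentially the same approach as the paper: reduce to prime powers by multiplicativity, expand the Dirichlet convolution as $\sum_{e=0}^d b_{r+1}(p^e)$, substitute the recurrence from Proposition~\ref{eq:brrecur}, and telescope to $p^d b_r(p^d)$. You are simply more explicit about the multiplicativity reduction and the $i=0$ boundary term than the paper is.
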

 \begin{proof}
   The telescopic sum
   \begin{equation*}
     (1 \ast b_{r+1})(p^d) =  \sum_{e=0}^d b_{r+1}(p^e) =
    \sum_{e=0}^d (p^eb_r(p^e) - p^{e-1}b_r(p^{e-1}) 
   \end{equation*}
 evaluates to $p^db_r(p^d)$ at any prime power $p^d$.
 \end{proof}
 \begin{comment}  %% old proof
  \begin{proof}
 % It seems to be that
% \begin{equation*}
%   \sum_{e=0}^m b_r(p^e)=0, \qquad m \geq r-1
% \end{equation*}
% With $d=r$ we get $0=0$, the interesting case is $d<r$.
The claim is that
\begin{equation*}
  \sum_{e=0}^d b_{r+1}(p^e) = p^d b_r(p^d)
\end{equation*}
for all primes $p$ and for all natural numbers $r,d \geq 1$.
Using induction over $d$ we find that
\begin{multline*}
   \sum_{e=0}^{d+1} b_{r+1}(p^e) =  
    \sum_{e=0}^{d} b_{r+1}(p^e) + b_{r+1}(p^{d+1}) =
    p^db_r(p^d) +  b_{r+1}(p^{d+1}) =
    p^db_r(p^d) + (-1)^{d+1}p^{\binom{d+1}2} \binom{r+1}{d+1}_p \\
    = (-1)^dp^dp^{\binom d2} \binom rd_p  + 
    (-1)^{d+1}p^{\binom{d+1}2} \binom{r+1}{d+1}_p =
     (-1)^d p^{\binom{d+1}2}\big( \binom rd_p - \binom{r+1}{d+1}_p\big) 
    % \\=
    % (-1)^d p^{\binom{d+1}2}\big
%    ( \binom rd_p - \frac{p^{r+1}-1}{p^{r-d}-1}
%    \binom r{d+1}_p \big)
\end{multline*}
should be equal to
\begin{equation*}
  p^{d+1} b_{r}(p^{d+1}) = (-1)^{d+1}p^{d+1} p^{\binom{d+1}2} \binom{r}{d+1}_p
\end{equation*}
These two expressions are identical if and only if
\begin{equation*}
  p^{d+1} \binom r{d+1}_p = \binom{r+1}{d+1}_p - \binom rd_p
\end{equation*}
The right hand side is
\begin{equation*}
    \binom r{d+1}_p+p^{r-d} \binom rd_p- \binom rd_p
\end{equation*}
by Pascal's triangle for Gaussian binomial coefficients \cite[Equation
17b]{stanley97}. We thus need to verify
\begin{equation*}
  (p^{d+1}-1) \binom r{d+1}_p = (p^{r-d}-1) \binom rd_p
\end{equation*}
But this is true by direct inspection.
\end{proof}
\end{comment}

\begin{prop} \label{prop:crbr} The multiplicative arithmetic sequences $c_r$
  are given by $c_1=1,-2,0,0,\ldots$ and
  \begin{equation*}
 c_{r+1}(n)= n(b_r(n)-b_r(n/2)) \qquad
  \text{(where $b_r(n/2)=0$ for odd $n$)}
  \end{equation*}
 for all $r,n \geq 1$.
\end{prop}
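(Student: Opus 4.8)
The plan is to reduce the claim to the factorization $a = 1 \ast c_1$ together with Corollary~\ref{cor:dirichletb}. First I would pin down $c_1 = a \ast b_1 = a \ast \mu$ by a direct computation at prime powers: since $\mu$ is supported on $1$ and the primes, $c_1(p^d) = a(p^d)\mu(1) + a(p^{d-1})\mu(p) = a(p^d) - a(p^{d-1})$ for $d \geq 1$, and $c_1(1) = 1$. Because $a(p^e) = 1$ for odd $p$ and $a(2^e) = -1$ for $e \geq 1$, this gives $c_1(p^d) = 0$ for odd $p$ and $d \geq 1$, $c_1(2) = -2$, and $c_1(2^d) = 0$ for $d \geq 2$; as $c_1$ is multiplicative, $c_1 = 1, -2, 0, 0, \ldots$, which is the first assertion. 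A second two-term computation at prime powers (for odd $p^d$ only the divisor $1$ lies in the support of $c_1$; for $2^d$ the divisors $1$ and $2$) then shows $\sum_{d \mid n} c_1(d) = a(n)$ for all $n$, i.e. $a = 1 \ast c_1$.

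Next, for $r \geq 1$, I would combine this factorization of $a$ with Corollary~\ref{cor:dirichletb}, using associativity and commutativity of Dirichlet convolution:
\[
  c_{r+1} = a \ast b_{r+1} = (1 \ast c_1) \ast b_{r+1} = c_1 \ast (1 \ast b_{r+1}),
\]
and $(1 \ast b_{r+1})(n) = n\, b_r(n)$ by Corollary~\ref{cor:dirichletb}. Since $c_1$ is supported on $\{1, 2\}$ with $c_1(1) = 1$ and $c_1(2) = -2$, the convolution collapses to the two-term sum
\[
  c_{r+1}(n) = c_1(1)\cdot n\, b_r(n) + c_1(2)\cdot \tfrac{n}{2}\, b_r(n/2) = n\, b_r(n) - n\, b_r(n/2) = n\bigl(b_r(n) - b_r(n/2)\bigr),
\]
with the convention $b_r(n/2) = 0$ when $n$ is odd. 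This is exactly the claimed formula.

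I do not expect any genuine obstacle: the argument is a short formal manipulation of Dirichlet convolutions resting on the already-established Corollary~\ref{cor:dirichletb}. The only points that need a little care are the prime-power evaluations fixing $c_1$ and the factorization $a = 1 \ast c_1$, and keeping track of the $n/2$ term when $n$ is odd. Should one wish to bypass the factorization, an equivalent route is to note that both $c_{r+1}(n)$ and $n\bigl(b_r(n) - b_r(n/2)\bigr)$ are multiplicative in $n$ and to compare them at prime powers, invoking $(1 \ast b_{r+1})(p^d) = p^d b_r(p^d)$ (from the proof of Corollary~\ref{cor:dirichletb}) and, for $p = 2$, the recurrence of Proposition~\ref{eq:brrecur}; but the convolution argument above is cleaner.
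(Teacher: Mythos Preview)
Your argument is correct. It differs from the paper's proof in organization: the paper verifies $c_1=1,-2,0,0,\ldots$ as you do, but then splits the recursive formula into cases. For odd $n$ it observes that $a$ restricts to the constant sequence $1$, so $c_{r+1}(n)=(1\ast b_{r+1})(n)=nb_r(n)$ directly from Corollary~\ref{cor:dirichletb}; for $n=2^d$ it computes $(a\ast b_{r+1})(2^d)$ by hand using the recurrence of Proposition~\ref{eq:brrecur}, and then appeals to multiplicativity for general even $n$. Your route, factoring $a=1\ast c_1$ and using associativity of Dirichlet convolution to write $c_{r+1}=c_1\ast(1\ast b_{r+1})$, handles all $n$ at once and never invokes the recurrence at $p=2$ explicitly (it is already absorbed into Corollary~\ref{cor:dirichletb}). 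The paper in fact uses the identity $a=1\ast c_1$ later, in the proof of Corollary~\ref{cor:dirichlet}, so your approach simply brings that observation forward; it buys a shorter, case-free argument, while the paper's version is slightly more hands-on but requires no preliminary verification of $a=1\ast c_1$.
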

\begin{proof}
  The two multiplicative sequences $c_1 = a \ast \mu$ and $1,-2,0,0,\ldots$
  are identical since they agree on all prime powers. 
  For {\em odd\/} $n$, $c_{r+1}(n)=(a \ast b_{r+1})(n) = (1 \ast b_{r+1})(n) =
  nb_r(n)$ by Corollary~\ref{cor:dirichletb}. For powers of $2$,
  \begin{equation*}
  c_{r+1}(2^d) = (a \ast b_{r+1})(2^d) = 
  b_{r+1}(2^d) - \sum_{e=0}^{d-1}b_{r+1}(2^e) =
  2^db_r(2^d) - 2^{d-1}b_r(2^{d-1}) -  2^{d-1}b_r(2^{d-1}) =
  2^d(b_r(2^d)-b_r(2^{d-1}))
\end{equation*}
by the recurrence relation of Proposition~\ref{eq:brrecur}.  Thus
$c_{r+1}(n)= n(b_r(n)-b_r(n/2))$ for even $n$ by multiplicativity.
% Let next $n$ be any even natural number and
% write $n=2^dm$ where $m$ is odd and $d \geq 1$.  Then
% \begin{equation*}
%   c_{r+1}(n)= c_{r+1}(2^dm) =  c_{r+1}(2^d)c_{r+1}(m) = 
%   2^d(b_r(2^d)-b_r(2^{d-1}))mb_r(m) = nb_r(n) -nb_r(n/2)
% \end{equation*}
\end{proof}

% $c_1(n) = 1,-2,0,0,\ldots$ let $d_1(n) = \frac{1}{n}c_1(n)= 1,-1,0,0,\ldots$. 
% Let $d_r(n)= \frac{1}{n}c_r(n)$.

% $(\mu \ast d_1) = \mu(n)-\mu(n/2)$ so $d_2(n)= (\mu \ast d_1)(n)$.

% $d_1 \ast d_2$ looks like $c_2$

% There are also $\mu_k$ functions $\mu_k \ast d_1$?

% At all odd $n$, $c_r = a \ast b_r = 1 \ast b_r$ and $\mu \ast c_r = b_r$ by
% \Mb\ inversion. The
% equation above is then equivalent to $nb_r(n)=c_{r+1}(n)$. Enuf to check for
% odd prime powers.

The multiplicative sequences $c_r$ can be defined recursively. The initial
sequence is $c_1 = 1,-2,0,0,\ldots$. For $r \geq 1$,
\begin{equation*}
 c_{r+1}(2^d) =
 \begin{cases}
  2c_r(2) & d= 1 \\   
   2^dc_r(2^d) + \sum_{j=2}^d 2^{d+j-2}c_r(2^{d-j}) & d \geq 2
 \end{cases}
\end{equation*}
for powers of $2$.  At powers of an {\em odd\/} prime $p$, $c_{r+1}(p^d) =
p^dc_r(p^d)-p^{d-1}c_r(p^{d-1})$ as the sequences $b_r$ and $c_r$ coincide and
we can refer to Proposition~\ref{eq:brrecur}.

\begin{cor}\label{cor:dirichlet}
  The Dirichlet series of the multiplicative arithmetic functions $b_r$ and
  $c_r$ are
  \begin{equation*}
    \sum_{n=1}^\infty \frac{b_r(n)}{n^s} =
    \frac{1}{\zeta(s)\zeta(s-1) \cdots \zeta(s-r+1)}, \qquad
    \sum_{n=1}^\infty \frac{c_r(n)}{n^s} =
    \frac{2^s-2}{2^s\zeta(s-1) \cdots \zeta(s-r+1)}
  \end{equation*}
  where $\zeta(s)$ is the Riemann $\zeta$-function and $r \geq 1$.
\end{cor}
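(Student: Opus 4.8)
The plan is to compute both Dirichlet series by multiplicativity, reducing each to an Euler product over primes and then identifying the local factors. First I would recall that since $b_r$ is a multiplicative arithmetic function, its Dirichlet series factors as $\sum_n b_r(n)n^{-s} = \prod_p \big(\sum_{e \geq 0} b_r(p^e)p^{-es}\big)$, and similarly for $c_r$. So everything comes down to evaluating the local power series $B_{r,p}(x) = \sum_{e \geq 0} b_r(p^e)x^e$ at $x = p^{-s}$, and likewise $C_{r,p}(x) = \sum_{e \geq 0} c_r(p^e)x^e$.

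For the $b_r$ series, I would proceed by induction on $r$. The base case $r=1$ is $b_1 = \mu$, whose Dirichlet series is $1/\zeta(s)$, matching the empty-or-single-factor convention. For the inductive step, Corollary~\ref{cor:dirichletb} states $1 \ast b_{r+1} = n\, b_r(n)$; translating to Dirichlet series, convolution by the constant sequence $1$ multiplies by $\zeta(s)$, and multiplication of $b_r(n)$ by $n$ shifts the argument, giving $\sum_n (n\,b_r(n))n^{-s} = \sum_n b_r(n)n^{-(s-1)}$. Hence $\zeta(s)\sum_n b_{r+1}(n)n^{-s} = \sum_n b_r(n)n^{-(s-1)}$, which by the inductive hypothesis equals $1/\big(\zeta(s-1)\zeta(s-2)\cdots\zeta(s-r)\big)$; dividing by $\zeta(s)$ yields the claimed formula for $b_{r+1}$. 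For $c_r$, I would use $c_r = a \ast b_r$ where $a(n) = (-1)^{n+1}$. The Dirichlet series of $a$ is $\sum_n (-1)^{n+1}n^{-s} = (1 - 2^{1-s})\zeta(s) = \frac{2^s-2}{2^s}\zeta(s)$ (this is the standard Dirichlet eta identity $\eta(s) = (1-2^{1-s})\zeta(s)$). Multiplying by the series for $b_r$, the $\zeta(s)$ cancels, leaving $\frac{2^s-2}{2^s}\cdot\frac{1}{\zeta(s-1)\cdots\zeta(s-r+1)}$, exactly as stated.

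The only mildly delicate point — and the main thing to get right — is the identity $\sum_{n\geq 1}(-1)^{n+1}n^{-s} = (1-2^{1-s})\zeta(s)$, together with the fact that $\zeta(s)^{-1} = \sum_n \mu(n)n^{-s}$; both are classical, the first from splitting the alternating sum into even and odd terms ($\sum (-1)^{n+1}n^{-s} = \zeta(s) - 2\sum_{k}(2k)^{-s} = \zeta(s) - 2^{1-s}\zeta(s)$), the second the standard Euler-product identity for the M\"obius function. One should also note that the formal manipulations are justified in the region of absolute convergence $\Re(s) > r$, where all the $\zeta(s-j)$ for $0 \leq j \leq r-1$ converge absolutely, so the Euler products and term-by-term rearrangements are legitimate. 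Given Corollary~\ref{cor:dirichletb} and Proposition~\ref{prop:crbr}, no further combinatorial input is needed — the result is essentially a bookkeeping translation of those recursions into the language of Dirichlet series.
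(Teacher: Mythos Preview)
Your proposal is correct and follows essentially the same route as the paper: induction on $r$ for the $b_r$-series via the recurrence $\zeta(s)\beta_{r+1}(s)=\beta_r(s-1)$ coming from Corollary~\ref{cor:dirichletb} (with base case $b_1=\mu$), and then $c_r=a\ast b_r$ together with the Dirichlet eta identity $\sum(-1)^{n+1}n^{-s}=(1-2^{1-s})\zeta(s)$ for the $c_r$-series. The Euler-product framing in your opening paragraph is not actually used in your argument (nor in the paper's), and Proposition~\ref{prop:crbr} is not needed here---only the definition $c_r=a\ast b_r$ enters---but these are cosmetic remarks.
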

\begin{proof}
  Write $\beta_r(s)$ for the Dirichlet series of $b_r(n)$.
  Corollary~\ref{cor:dirichletb} implies the recurrence
  \begin{equation*}
    \zeta(s)\beta_{r+1}(s) = \beta_r(s-1)
  \end{equation*}
  as $nb_r(n)$, with series $\beta_r(s-1)$, is the Dirichlet convolution of
  $1$, with series $\zeta(s)$, and $b_{r+1}(n)$. (The Dirichlet series of a
  Dirichlet convolution is the product of the Dirichlet series of the
  factors.)  The expression for the Dirichlet series of $b_r(n)$ follows by
  induction starting with the series, $\zeta(s)^{-1}$, for $b_1=\mu$.  The
  Dirichlet series of the Dirichlet convolution $c_r = a \ast b_r$ is the
  product of this series and the series, $\zeta(s) (1-2^{1-s})$, of $a=1 \ast
  c_1$.
\end{proof}

It is easy to make explicit computations on a computer.  The values of the
multiplicative arithmetic function $\frac{1}{n}c_r(n) = \rchi_r(\Pi^*(\qt
{\Sigma_{n-1}}{\Sigma_n}),\Sigma_n)$, $2 \leq n \leq 15$ and $1 \leq r \leq
5$, are
\begin{center}
 \begin{tabular}[t]{>{$}c<{$}|*{14}{>{$}c<{$}}}
\frac{1}{n}c_r(n) &
n=2 & 3 & 4 & 5 & 6 & 7 & 8 & 9 &10 & 11 &12 &13 &14 &15  \\ \hline 
r=1 & 1 & -1 & 0 & 0 & 0 & 0 & 0 & 0 & 0 & 0 & 0 & 0 & 0 & 0 \\
r=2 & -2  & -1  & 1  & -1  & 2  & -1  & 0  & 0  & 2  & -1  & -1  & -1 &2 & 1 \\
r=3 &
-4 & -4 & 5 & -6 & 16 & -8 & -2 & 3 & 24 & -12 & -20 & -14 & 32 & 24   \\
r=4 &
-8 & -13 & 21 & -31 & 104 & -57 & -22 & 39 & 248 & -133 & -273 & -183 & 456 &
403  \\
r=5 &
-16 & -40 & 85 & -156 & 640 & -400 & -190 & 390 & 2496 & -1464 & -3400 & -2380
& 6400 & 6240 
\end{tabular}
\end{center}

\section*{Acknowledgments}
I would like to thank Micha\l\ Adamaszek, Magdalena Kedziorek, Matthew Gelvin,
and Morten S.\, Risager for inspiring discussions and valuable input.

%\bibliographystyle{amsplain}
%\bibliography{/home/vqd218/projects/top}

\def\cprime{$'$} \def\cprime{$'$} \def\cprime{$'$} \def\cprime{$'$}
  \def\cprime{$'$}
\providecommand{\bysame}{\leavevmode\hbox to3em{\hrulefill}\thinspace}
\providecommand{\MR}{\relax\ifhmode\unskip\space\fi MR }
% \MRhref is called by the amsart/book/proc definition of \MR.
\providecommand{\MRhref}[2]{%
  \href{http://www.ams.org/mathscinet-getitem?mr=#1}{#2}
}
\providecommand{\href}[2]{#2}

\end{document}